\newtheorem{theorem}{Theorem}[section]
\newtheorem{definition}{Definition}[section]
\newtheorem{lemma}{Lemma}[section]
\journal{Elsevier}
\begin{document}

\begin{frontmatter}



\title{Legendre-Fenchel duality and a generalized constitutive relation error}


\author[thu]{Mengwu Guo\corref{cor1}}
\ead{gmw13@mails.tsinghua.edu.cn}
\cortext[cor1]{Corresponding author.}

\author[Iowa]{Weimin Han}

\author[thu]{Hongzhi Zhong}

\address[thu]{Department of Civil Engineering, Tsinghua University, Beijing 100084, China}
\address[Iowa]{Department of Mathematics, University of Iowa, Iowa City, IA 52242, USA}

\begin{abstract}
  A generalized constitutive relation error is proposed in an analogous form to Fenchel-Young inequality on the basis of the key idea of Legendre-Fenchel duality theory. The generalized constitutive relation error is linked with the global errors of some admissible solutions for the problem in question, and is of wide applicability, especially in a posteriori error estimations of numerical methods. A class of elliptic variational inequalities is examined using the proposed approach and a strict upper bound of global energy errors of admissible solutions is obtained.

\end{abstract}

\begin{keyword}
Legendre-Fenchel duality; Fenchel-Young inequality; Generalized constitutive relation error; Elliptic variational inequality; \emph{A posteriori} error estimation
\end{keyword}



\end{frontmatter}


\section{Introduction}

Legendre-Fenchel duality \cite{ekeland1976convex,ciarlet2012new} is an important concept in convex analysis and variational methods, and acts as the theoretical basis of the link between primal and dual formulations of some convex optimization problems. In some PDE problems described in a variational form, Legendre-Fenchel tansforms are used to formulate the corresponding dual variational problems. A classic example is the complementary energy principle versus the potential energy principle for an elastic problem \cite{atkinson2005theoretical}.

The constitutive relation error (CRE) was proposed by Ladev\`{e}ze \cite{ladeveze1983error,ladeveze2005mastering} as an estimator of \emph{a posteriori} error of finite element analysis. Among the available techniques of \emph{a posteriori} error estimation, the CRE-based estimation provides guaranteed strict upper bounds of global energy-norm errors of numerical solutions of the PDEs \cite{ladeveze1983error}, and guaranteed strict upper and lower bounds of numerical errors in various quantities of interest \cite{ladeveze1999local}. The strict bounding property, together with its advantage of wide applicability, makes the CRE stand out for \emph{a posteriori} error estimation in a variety of practical problems \cite{chamoin2009strict,ladeveze2010calculation,ladeveze2008strict,panetier2009strict,ladeveze2013new,guo2015goal,wang2015unified}.

For the constitutive relations represented by a strain energy functional and complementary strain energy functional which are mutually convex conjugate, or referred to as Legendre-Fenchel transform, Ladev\`{e}ze introduced an expression of CRE in the form of Fenchel-Young inequality \cite{ladeveze2005mastering}, a natural result of Legendre-Fenchel duality \cite{borwein2010convex}. In this paper, Legendre-Fenchel duality is briefly reviewed with its application to the establishment of complementary energy principle from the potential energy principle for a hyperelastic problem. Similar to the CRE form for the hyperelastic materials, a generalized constitutive relation error (GCRE) is proposed in an analogous but generalized formulation of Fenchel-Young inequality. Then the GCRE is used in a class of elliptic variational inequalities. It is verified that the GCRE provides strict upper bounds of global errors of admissible solutions, which could be further applied to \emph{a posteriori} error estimation for the numerical solutions to these variational inequalities.

Following the introduction, some basic concepts about Legendre-Fenchel duality and its application to hyperelastic problem are introduced in Section 2. After an overview of the CRE for hyperelasticity, the GCRE is proposed in Section 3. A class of elliptic variational inequalities is dealt with using the GCRE and some significant properties are highlighted in Section 4. In Section 5, the application of the GCRE to \emph{a posteriori} error estimation is briefly discussed, and conclusions are drawn in Section 6.

\section{Legendre-Fenchel duality}

In this section, some important concepts about Legendre-Fenchel duality are briefly reviewed. The duality is also adopted for the transformation from potential energy principle to complementary  energy principle of a static hyperelastic problem.

\subsection{Basic concepts}

The duality between a normed space $Y$ and its dual space $Y'$ is denoted by  $_{Y'}\langle\cdot,\cdot\rangle_{Y}$. The dual space of $Y'$, denoted by $Y''$, is identified as $Y$ by means of the usual canonical isometry if $Y$ is a reflexive Banach space.

Throughout this paper, $I_{\{\bullet\}}$ denotes the indicator function of set $\{\bullet\}$, i.e. $I_{\{\bullet\}}(y)=0$ if $y\in {\{\bullet\}}$ and $I_{\{\bullet\}}(y)=+\infty$ if $y\notin {\{\bullet\}}$. A function $f:Y\to\mathbb{R}\cup\{+\infty\}$ is called a proper function if $\{y\in Y:f(y)<+\infty\}\neq \emptyset$.

Considering a proper function $f:Y\to\mathbb{R}\cup\{+\infty\}$ on a normed space $Y$, its Legendre-Fenchel transform $f^*:Y'\to\mathbb{R}\cup\{+\infty\}$ is defined as
\begin{equation}
f^*(y^*)=\sup_{y\in Y}\left\{_{Y'}\langle y^*,y \rangle_{Y}-f(y)\right\}\,,\quad y^*\in Y'\,.
\end{equation}
Thus Fenchel-Young inequality is written as follows:
\begin{equation}
f(y)+f^*(y^*)-_{Y'}\langle y^*,y \rangle_{Y}\geq 0\quad \forall(y,y^*)\in Y\times Y'\,.
\end{equation}

If $Y$ is a reflexive Banach space and $f:Y\to\mathbb{R}\cup\{+\infty\}$ is a proper, convex and lower semi-continuous function, the Legendre-Fenchel transform $f^*:Y'\to\mathbb{R}\cup\{+\infty\}$ of $f$ is also proper, convex and lower semi-continuous, and the Legendre-Fenchel transform $f^{**}$ of $f^*$ is identified as $f$, i.e. $f^{**}=f$.

Let $Y$ and $Z$ be two reflexive Banach spaces, $f:Y\to\mathbb{R}\cup\{+\infty\}$ and $h:Z'\to\mathbb{R}\cup\{+\infty\}$ be two proper, convex and lower semi-continuous functions, $\Lambda:Y\to Z'$ a linear continuous operator. Define $J:Y\to\mathbb{R}\cup\{+\infty\}$ as
\begin{equation}
J(y)=f(y)+h(\Lambda y)\,,\quad y\in Y\,,
\end{equation}
and $\tilde{L}:Y\times Z$ as
\begin{equation}
\tilde{L}(y,z)=f(y)+_{Z'}\langle\Lambda y,z \rangle_{Z}-h^*(z)\,,\quad (y,z)\in Y\times Z.
\end{equation}
From the definition of Legendre-Fenchel transform of $h^*$,
\begin{equation*}
h(z^*)=h^{**}(z^*)=\sup_{z\in Z}\left\{_{Z'}\langle z^*,z \rangle_{Z}-h(z)\right\}\,,\quad z^*\in Z'\,,
\end{equation*}
one has
\begin{equation}\label{min}
\inf_{y\in Y}~J(y)=\inf_{y\in Y}\sup_{z\in Z}~\tilde{L}(y,z)\,.
\end{equation}

If the minimizing problem in \eqref{min} is called the primal problem, its dual problem (or dual formulation) is given in a maximizing form as
\begin{equation}\label{max}
\sup_{z\in Z}~G(z)
\end{equation}
where the function $G:Z\to \mathbb{R}\cup\{-\infty\}$ is defined as
\begin{equation}
G(z)=\inf_{y\in Y}~\tilde{L}(y,z)\,,\quad z\in Z\,.
\end{equation}

What needs to be further determined is whether the infimum found in the primal problem  \eqref{min} is equal to the supremum found in the dual problem  \eqref{max}, i.e. whether the following relations hold true
\begin{equation}
\inf_{y\in Y}J(y)=\inf_{y\in Y}\sup_{z\in Z}~\tilde{L}(y,z)=\sup_{z\in Z}\inf_{y\in Y}~\tilde{L}(y,z)=\sup_{z\in Z}~G(z).
\end{equation}
This is equivalent to deciding whether a saddle point $(\bar{y},\bar{z})$ of $\tilde{L}$ exists in $Y\times Z$, i.e
\begin{equation}
\inf_{y\in Y}~\tilde{L}(y,\bar{z})=\tilde{L}(\bar{y},\bar{z})=\sup_{z\in Z}~\tilde{L}(\bar{y},{z})\,.
\end{equation}
If the saddle point $(\bar{y},\bar{z})$ exists, the infimum of \eqref{min}, the supremum of \eqref{max} and $\tilde{L}(\bar{y},\bar{z})$ are all equal.

\subsection{Legendre-Fenchel duality in hyperelasticity}

The definitions of the constitutive relations for hyperelastic materials are in the form of Legendre-Fenchel duality. To show specific applications of the duality, a static problem of a hyperelastic body is taken into consideration.

\subsubsection{Problem definition}
\begin{figure}
  \centering
  \includegraphics[scale=0.6]{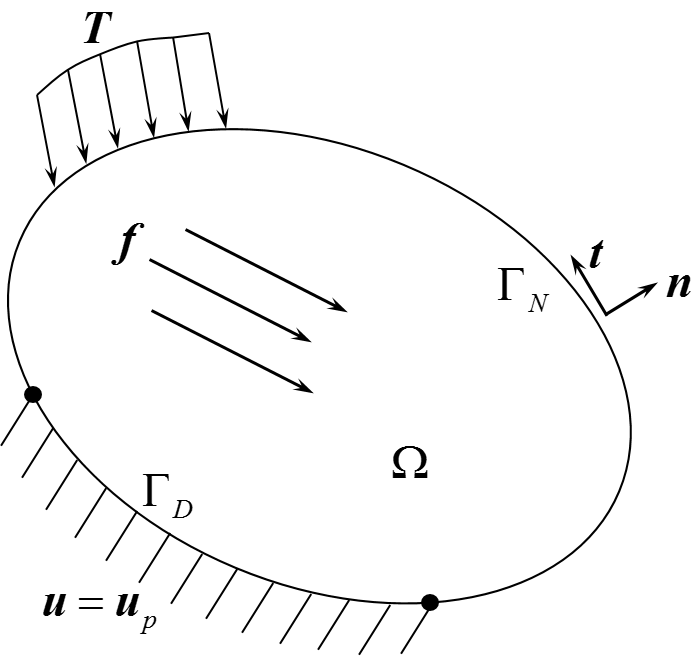}
  \caption{A static problem of an elastic body}\label{static}
\end{figure}

Consider an elastic body whose undeformed configuration $\boldsymbol{X}$ is defined in a domain $\Omega\subset \mathbb{R}^3$ with a Lipschitz boundary $\Gamma=\Gamma_D\cup \Gamma_N$, where $\Gamma_D\neq \emptyset$ is the Dirichlet boundary, $\Gamma_N$ is the Neumann boundary, and $\Gamma_D\cap\Gamma_N=\emptyset$. The body is subject to a prescribed body force of density $\boldsymbol{f}\in [L^2(\Omega)]^3$ in $\Omega$ with respect to the undeformed volume, a prescribed displacement $\boldsymbol{u}_D$ on $\Gamma_D$, and a prescribed surface force of density $\boldsymbol{T}\in [L^2(\Gamma_N)]^3$ with respect to the undeformed surface area.


Let us define a space $\mathcal{E}= [H(\mathrm{div},\Omega)]^{3}$, where $H(\mathrm{div},\Omega)=\{\boldsymbol{q}\in [L^2(\Omega)]^d:\mathrm{div}\boldsymbol{q}\in L^2(\Omega)\}$.
A continuous positive-definite bilinear form $\langle\cdot,\cdot\rangle:\mathcal{E}\times \mathcal{E} \to \mathbb{R}$, also a duality pair, is defined in the following form throughout this paper:
\begin{equation}
\langle \boldsymbol{\sigma},\boldsymbol{\varepsilon} \rangle=\int_{\Omega} \boldsymbol{\sigma}:\boldsymbol{\varepsilon}\,, \quad (\boldsymbol{\sigma},\boldsymbol{\varepsilon})\in \mathcal{E}\times \mathcal{E}\,.
\end{equation}

Consider the following problem: find $(\boldsymbol{u},\boldsymbol{P})$ such that
\begin{equation}\label{gov1}
\mathrm{div}\boldsymbol{P}+\boldsymbol{f}=\boldsymbol{0}\quad \mathrm{in}\;\Omega\,,\quad \boldsymbol{P}\boldsymbol{n}=\boldsymbol{T}\quad \mathrm{on}\;\Gamma_N\,,\quad \boldsymbol{u}=\boldsymbol{u}_D\quad \mathrm{on}\;\Gamma_D\,,
\end{equation}
together with a hyperelastic constitutive relation expressed in the following form:
\begin{equation}\label{gov2}
W(\boldsymbol{F})+W^*(\boldsymbol{P})-\langle \boldsymbol{P},\boldsymbol{F} \rangle = 0\,,
\end{equation}
where $\boldsymbol{u}$ is the vector of displacements, $\boldsymbol{P}$ is the first Piola-Kirchhoff stress tensor, $\boldsymbol{F}=\boldsymbol{F}(\boldsymbol{u})=\boldsymbol{I}+\nabla \boldsymbol{u}=\nabla \boldsymbol{\psi}$, $\boldsymbol{I}$ is the second-order unit tensor, $\boldsymbol{\psi}=\boldsymbol{X}+\boldsymbol{u}$ is the deformed configuration, and two proper functionals $W:\mathcal{E}\to\mathbb{R}$ and $W^*:\mathcal{E}\to\mathbb{R}$ are referred to as strain energy functional and complementary strain energy functional, respectively.  As the 'convex conjugate' or Legendre-Fenchel transform of $W$, $W^*$ is defined as
\begin{equation}\label{Wdual}
W^*(\boldsymbol{\tau})=\sup_{\boldsymbol{\epsilon}\in \mathcal{E}}\{\langle \boldsymbol{\tau},\boldsymbol{\epsilon} \rangle-W(\boldsymbol{\epsilon})\}\,.
\end{equation}

The Fenchel-Young inequality is then given as
\begin{equation}\label{CRE1}
W(\boldsymbol{\epsilon})+W^*(\boldsymbol{\tau})-\langle \boldsymbol{\tau},\boldsymbol{\epsilon} \rangle \geq 0\quad \forall (\boldsymbol{\epsilon},\boldsymbol{\tau})\in \mathcal{E}\times \mathcal{E}\,,
\end{equation}
and the constitutive relation is satisfied if and only if the equal sign holds true. Alternatively, the relation is expressed in the following subdifferential form:
\begin{equation}
\boldsymbol{P} \in \partial W(\boldsymbol{F})\,.
\end{equation}

The strain energy functional for hyperelastic materials is usually written as the integration of a stored strain energy function $\tilde{W}(\cdot,\cdot):\Omega\times \mathbb{M}_{+}^{3}\to \mathbb{R}$ over the domain $\Omega$, i.e.
\begin{equation}
W(\boldsymbol{F})=\int_{\Omega}\tilde{W}(\boldsymbol{x},\boldsymbol{F})\,,
\end{equation}
where $\mathbb{M}_{+}^{3}:=\{\boldsymbol{F}\in \mathbb{M}^3:\mathrm{det}\boldsymbol{F}>0\}$ with $\mathbb{M}^3$ being the set of all real square matrices of order 3. In order to ensure the existence of solution to this hyperelastic problem, which will be expressed in the form of minimization of a potential energy in subsection 2.2.2, the stored strain energy function $\tilde{W}$ is assumed to satisfy the following properties \cite{ciarlet1988mathematical}:
\begin{itemize}
  \item polyconvexity: for almost all $\boldsymbol{x}\in\Omega$, there exists a convex function $\mathbb{W}(\boldsymbol{x},\cdot,\cdot,\cdot):\mathbb{M}_{+}^{3}\times \mathbb{M}_{+}^{3}\times(0,+\infty)\to\mathbb{R}$ such that
      \begin{equation}
      \mathbb{W}(\boldsymbol{x},\boldsymbol{F},\mathrm{Cof}~\boldsymbol{F},\mathrm{det}~\boldsymbol{F})=\tilde{W}(\boldsymbol{x},\boldsymbol{F})\quad \forall \boldsymbol{F}\in \mathbb{M}^{3}_{+}\,,
      \end{equation}
      where $\mathrm{Cof}~\boldsymbol{F}:=(\mathrm{det}~\boldsymbol{F})\boldsymbol{F}^{-T}$, and the function $\mathbb{W}(\cdot,\boldsymbol{F},\boldsymbol{H},\delta):\Omega\to\mathbb{R}$ is measurable for all $(\boldsymbol{F},\boldsymbol{H},\delta)\in \mathbb{M}_{+}^{3}\times \mathbb{M}_{+}^{3}\times(0,+\infty)$;
  \item behavior as $\mathrm{det}~\boldsymbol{F}\to 0^+$: for almost all $\boldsymbol{x}\in \Omega$,
      \begin{equation}
      \lim_{\mathrm{det}~\boldsymbol{F}\to 0^+}\tilde{W}(\boldsymbol{x},\boldsymbol{F})=+\infty\,;
      \end{equation}
  \item coerciveness: there exist constants $\alpha$, $\beta$, $s$, $t$, $m$ such that $\alpha>0$, $s\geq 2$, $t\geq s/(s-1)$, $m>1$,
  \begin{equation}
  \tilde{W}(\boldsymbol{x},\boldsymbol{F})\geq \alpha(\|\boldsymbol{F}\|^s+\|\mathrm{Cof}~\boldsymbol{F}\|^t+(\mathrm{det}~\boldsymbol{F})^m)+\beta
  \end{equation}
  for almost $\boldsymbol{x}\in \Omega$ and $\forall \boldsymbol{F}\in \mathbb{M}^3_{+}$, where $\|\cdot\|$ is the matrix norm of $\mathbb{M}^3$, i.e. $\|\boldsymbol{A}\|=(\boldsymbol{A}:\boldsymbol{A})^{\frac{1}{2}}$, $\boldsymbol{A}\in \mathbb{M}^3$.
\end{itemize}

The energy functional $W^*$ is the Fenchel conjugate of $W$, meaning that $W^*$ is convex and lower semi-continuous, and it can be rewritten by a stored complementary strain energy function $\tilde{W}^*(\cdot,\cdot):\Omega\times \mathbb{M}^3\to \mathbb{R}$ as
\begin{equation}
W^*(\boldsymbol{\tau})=\int_{\Omega}\tilde{W}^*(\boldsymbol{x},\boldsymbol{\tau})\,,
\end{equation}
with $\tilde{W}^*$ being the Fenchel conjugate of $\tilde{W}$, i.e. for almost all $\boldsymbol{x}\in \Omega$
\begin{equation}
\tilde{W}^*(\boldsymbol{x},\boldsymbol{\tau})=\sup_{\boldsymbol{\epsilon}\in \mathbb{M}^3}\{\boldsymbol{\tau}:\boldsymbol{\epsilon}-\tilde{W}(\boldsymbol{x},\boldsymbol{\epsilon})\}\,.
\end{equation}
In order to assure the existence of solution to the dual variational problem, which will be introduced in 2.2.2, the coerciveness of function $\tilde{W}^*$ is assumed as: there exist constants $\alpha '$, $\beta '$, $s'$ such that $\alpha '>0$, $s>1$,
\begin{equation}
\tilde{W}^*(\boldsymbol{x},\boldsymbol{\tau})\geq \alpha '\|\boldsymbol{\tau}\|^{s'}+\beta'
\end{equation}
for almost $\boldsymbol{x}\in \Omega$ and $\forall \boldsymbol{\tau}\in \mathbb{M}^3$.

\subsubsection{Energy principles and duality}

For notation, two sets are defined as
\begin{equation}
\begin{split}
X_D=&\{\boldsymbol{\psi}\in[W^{1,s}(\Omega)]^3:\mathrm{Cof}~\nabla\boldsymbol{\psi}\in[L^t(\Omega)]^{3\times 3},\mathrm{det}~\nabla\boldsymbol{\psi}\in L^{m}(\Omega),\\
& ~~~~\boldsymbol{\psi}=\boldsymbol{X}+\boldsymbol{u}_D~\mathrm{a.e.~on}~\Gamma_D,\mathrm{det}~\nabla\boldsymbol{\psi}>0~\mathrm{a.e.~in}~\Omega\}-\boldsymbol{X}\,,\\
X=&\{\boldsymbol{\psi}\in[W^{1,s}(\Omega)]^3:\mathrm{Cof}~\nabla\boldsymbol{\psi}\in[L^t(\Omega)]^{3\times 3},\mathrm{det}~\nabla\boldsymbol{\psi}\in L^{m}(\Omega),\\
& ~~~~\mathrm{det}~\nabla\boldsymbol{\psi}>0~\mathrm{a.e.~in}~\Omega\}-\boldsymbol{X}\,,
\end{split}
\end{equation}
and two linear functionals $l_p\in X'$ and $l_c\in \mathcal{E}'$ are defined as
\begin{equation}
l_p(\boldsymbol{v})=\int_\Omega\boldsymbol{f}\cdot\boldsymbol{v}+\int_{\Gamma_N}\boldsymbol{T}\cdot\boldsymbol{v}\,,\quad l_c(\boldsymbol{\tau})=\int_{\Gamma_D}(\boldsymbol{\tau}\boldsymbol{n})\cdot\boldsymbol{u}_D \,,\quad (\boldsymbol{v},\boldsymbol{\tau})\in X\times \mathcal{E}.
\end{equation}

The model problem under consideration is alternatively described in a (primal) variational form:
\begin{equation}
\begin{split}
& \Pi_p(\boldsymbol{u})=\min_{\boldsymbol{v}\in X_D}\Pi_p(\boldsymbol{v})\\
& \Pi_p:X\to \mathbb{R},\boldsymbol{v}\mapsto W(\boldsymbol{F}(\boldsymbol{v}))-l_p(\boldsymbol{v})\,,
\end{split}
\end{equation}
where $\Pi_p$ is usually called the potential energy functional, and this variational formulation, referred to as potential energy principle, can be rewritten as
\begin{equation}\label{primalhyp}
\begin{split}
& J(\boldsymbol{u})=\min_{\boldsymbol{v}\in X}J(\boldsymbol{v})\\
& J:X\to \mathbb{R}\cup \{+\infty\},\boldsymbol{v}\mapsto \Pi_p(\boldsymbol{v})+I_{X_D}(\boldsymbol{v})\,.
\end{split}
\end{equation}
The existence of the minimizer $\boldsymbol{u}\in X$ has been ensured.

From the duality in \eqref{Wdual}, the minimizing problem in \eqref{primalhyp} is further expressed as
\begin{equation}
\begin{split}
& \qquad\qquad\qquad\min_{\boldsymbol{v}\in X}J(\boldsymbol{v})=\min_{\boldsymbol{v}\in X}\max_{\boldsymbol{\tau}\in\mathcal{E}}~\tilde{\mathcal{L}}(\boldsymbol{v},\boldsymbol{\tau})\\
& \tilde{\mathcal{L}}(\boldsymbol{v},\boldsymbol{\tau})=\langle\boldsymbol{\tau}, \boldsymbol{F}(\boldsymbol{v})\rangle -W^*(\boldsymbol{\tau})-l_p(\boldsymbol{v})+I_{X_D}(\boldsymbol{v})
\,,\quad (\boldsymbol{v},\boldsymbol{\tau})\in X\times \mathcal{E}\,,
\end{split}
\end{equation}
which is exactly the treatment given in \eqref{min}.

\begin{theorem}
The dual variational problem of \eqref{primalhyp} is expressed as
\begin{equation}
\begin{split}
& G(\bar{\boldsymbol{P}})=\max_{\boldsymbol{\tau}\in\mathcal{E}}~ G(\boldsymbol{\tau})\\
& G:\mathcal{E}\to \mathbb{R}\cup\{-\infty\},\boldsymbol{\tau}\mapsto -W^*(\boldsymbol{\tau})+\int_{\Omega}\mathrm{tr}\boldsymbol{\tau}+l_c(\boldsymbol{\tau})-I_{SA}(\boldsymbol{\tau})\,,
\end{split}
\end{equation}
where $\mathrm{tr}\boldsymbol{\tau}:=\boldsymbol{\tau}:\boldsymbol{I}$, $SA$ is the so-called statically admissible set of stress fields, i.e.
\begin{equation}
SA=\{\boldsymbol{\tau}\in \mathcal{E}:\mathrm{div}\boldsymbol{\tau}+\boldsymbol{f}=0~\;\mathrm{a.e.~in}\;\Omega\,,\;
\boldsymbol{\tau}\boldsymbol{n}=\boldsymbol{T}~\;\mathrm{a.e.~on}\;\Gamma_N\}\,.
\end{equation}
\end{theorem}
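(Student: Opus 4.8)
The plan is to specialize the abstract construction of Section 2.1 to the choice $J = \Pi_p + I_{X_D}$ and then to evaluate the dual objective $G(\boldsymbol{\tau}) = \inf_{\boldsymbol{v}\in X}\tilde{\mathcal{L}}(\boldsymbol{v},\boldsymbol{\tau})$ in closed form. First I would separate off the terms that do not involve $\boldsymbol{v}$: using $\boldsymbol{F}(\boldsymbol{v}) = \boldsymbol{I} + \nabla\boldsymbol{v}$ and the bilinear form, $\langle\boldsymbol{\tau},\boldsymbol{F}(\boldsymbol{v})\rangle = \int_{\Omega}\mathrm{tr}\,\boldsymbol{\tau} + \int_{\Omega}\boldsymbol{\tau}:\nabla\boldsymbol{v}$, and since the indicator $I_{X_D}$ confines the infimum to $X_D$ this gives
\[
G(\boldsymbol{\tau}) = -W^*(\boldsymbol{\tau}) + \int_{\Omega}\mathrm{tr}\,\boldsymbol{\tau} + \inf_{\boldsymbol{v}\in X_D}\Bigl( \int_{\Omega}\boldsymbol{\tau}:\nabla\boldsymbol{v} - l_p(\boldsymbol{v}) \Bigr).
\]

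Next I would rewrite the remaining infimum with Green's formula. For $\boldsymbol{v}\in X_D$ one has $\int_{\Omega}\boldsymbol{\tau}:\nabla\boldsymbol{v} = -\int_{\Omega}(\mathrm{div}\,\boldsymbol{\tau})\cdot\boldsymbol{v} + \int_{\Gamma_D}(\boldsymbol{\tau}\boldsymbol{n})\cdot\boldsymbol{v} + \int_{\Gamma_N}(\boldsymbol{\tau}\boldsymbol{n})\cdot\boldsymbol{v}$, and because $\boldsymbol{v} = \boldsymbol{u}_D$ on $\Gamma_D$ for every $\boldsymbol{v}\in X_D$ the boundary integral over $\Gamma_D$ equals exactly $l_c(\boldsymbol{\tau})$. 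Subtracting $l_p(\boldsymbol{v})$ then merges the interior and $\Gamma_N$ contributions into the residual linear functional
\[
R_{\boldsymbol{\tau}}(\boldsymbol{v}) := -\int_{\Omega}(\mathrm{div}\,\boldsymbol{\tau} + \boldsymbol{f})\cdot\boldsymbol{v} + \int_{\Gamma_N}(\boldsymbol{\tau}\boldsymbol{n} - \boldsymbol{T})\cdot\boldsymbol{v},
\]
so that $G(\boldsymbol{\tau}) = -W^*(\boldsymbol{\tau}) + \int_{\Omega}\mathrm{tr}\,\boldsymbol{\tau} + l_c(\boldsymbol{\tau}) + \inf_{\boldsymbol{v}\in X_D}R_{\boldsymbol{\tau}}(\boldsymbol{v})$. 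The whole point is the dichotomy for this last infimum. If $\boldsymbol{\tau}\in SA$, then $\mathrm{div}\,\boldsymbol{\tau} + \boldsymbol{f} = \boldsymbol{0}$ in $\Omega$ and $\boldsymbol{\tau}\boldsymbol{n} = \boldsymbol{T}$ on $\Gamma_N$, hence $R_{\boldsymbol{\tau}}\equiv 0$ on $X_D$ and the infimum is $0$, reproducing $G(\boldsymbol{\tau}) = -W^*(\boldsymbol{\tau}) + \int_{\Omega}\mathrm{tr}\,\boldsymbol{\tau} + l_c(\boldsymbol{\tau})$. If $\boldsymbol{\tau}\notin SA$, then $\mathrm{div}\,\boldsymbol{\tau} + \boldsymbol{f}$ is nonzero on a set of positive volume measure or $\boldsymbol{\tau}\boldsymbol{n} - \boldsymbol{T}$ is nonzero on a subset of $\Gamma_N$ of positive surface measure; fixing any $\boldsymbol{v}_0\in X_D$ and adding admissible perturbations supported away from $\Gamma_D$ along which $R_{\boldsymbol{\tau}}$ is strictly negative drives $R_{\boldsymbol{\tau}}$ to $-\infty$, so the infimum is $-\infty$. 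The two cases together are precisely the value $-I_{SA}(\boldsymbol{\tau})$ absorbed into the formula, which is the claimed expression for $G$.

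It remains to justify the passage to $\max_{\boldsymbol{\tau}\in\mathcal{E}}$ and the existence of a maximizer $\bar{\boldsymbol{P}}$. The set $SA$ is a closed affine subset of $\mathcal{E}$ on which $\mathrm{div}\,\boldsymbol{\tau}$ is frozen equal to $-\boldsymbol{f}$, and it is nonempty since the first Piola--Kirchhoff stress of the primal solution of \eqref{gov1} lies in it; $W^*$ is convex and lower semicontinuous as the Fenchel conjugate of $W$, and the assumed coerciveness $\tilde{W}^*(\boldsymbol{x},\boldsymbol{\tau})\ge\alpha'\|\boldsymbol{\tau}\|^{s'} + \beta'$ makes $-G$ coercive over $SA$; a standard direct-method argument — a minimizing sequence for $-G$ is bounded, a weak limit is extracted, and weak lower semicontinuity of $W^*$ together with continuity of the linear part passes to the limit — then delivers the maximizer, as anticipated in Section 2.2.1.

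I expect the real obstacle to be the divergence claim in the case $\boldsymbol{\tau}\notin SA$: since $X_D$ is not a linear subspace — it is carved out by the pointwise constraint $\det\nabla\boldsymbol{\psi} > 0$ — one cannot merely scale a single direction to infinity, and the perturbations used to push $R_{\boldsymbol{\tau}}$ below every bound must be verified to stay in $X_D$ and within the prescribed Sobolev and integrability classes. This is also where, if one wants the full equality $\min_{\boldsymbol{v}\in X}J(\boldsymbol{v}) = \max_{\boldsymbol{\tau}\in\mathcal{E}}G(\boldsymbol{\tau})$ rather than only the weak-duality inequality built into the Fenchel--Young estimate \eqref{CRE1}, the saddle-point criterion recalled in Section 2.1 has to be invoked; in the spirit of Ladev\`{e}ze's construction the identity is otherwise obtained formally by reading off the Legendre--Fenchel transform.
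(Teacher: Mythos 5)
Your proposal follows essentially the same route as the paper's own proof: evaluate $G(\boldsymbol{\tau})=\min_{\boldsymbol{v}\in X}\tilde{\mathcal{L}}(\boldsymbol{v},\boldsymbol{\tau})$, split off $\int_\Omega\mathrm{tr}\,\boldsymbol{\tau}$, apply Green's theorem \eqref{Green} so the $\Gamma_D$ term yields $l_c(\boldsymbol{\tau})$, read off the residual whose infimum over $X_D$ is $0$ or $-\infty$ according to whether $\boldsymbol{\tau}\in SA$, and conclude existence of $\bar{\boldsymbol{P}}$ from coercivity, concavity and upper semi-continuity of $G$ on the reflexive space $\mathcal{E}$. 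You are in fact more scrupulous than the paper on the one delicate step — justifying that the infimum is $-\infty$ when $\boldsymbol{\tau}\notin SA$ despite $X_D$ not being a linear subspace — which the paper simply asserts in passing from its third to its fourth line.
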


\begin{proof}
With Green's theorem, one has
\begin{equation}\label{Green}
\langle\boldsymbol{\tau}, \nabla \boldsymbol{v}\rangle=\int_{\Omega}-\mathrm{div}\boldsymbol{\tau}\cdot\boldsymbol{v}+\int_{\Gamma}(\boldsymbol{\tau}\boldsymbol{n})\cdot\boldsymbol{v}\quad (\boldsymbol{v},\boldsymbol{\tau})\in X\times \mathcal{E}\,,
\end{equation}
and therefore
\begin{equation}
\begin{split}
G(\boldsymbol{\tau})& =\min_{\boldsymbol{v}\in X}~\tilde{\mathcal{L}}(\boldsymbol{v},\boldsymbol{\tau})\\
& =\min_{\boldsymbol{v}\in X_D}~\left\{\langle\boldsymbol{\tau}, \boldsymbol{I}+\nabla \boldsymbol{v}\rangle -W^*(\boldsymbol{\tau})-l_p(\boldsymbol{v})\right\}\\
& =\min_{\boldsymbol{v}\in X_D}~\left\{\int_{\Omega}-(\mathrm{div}\boldsymbol{\tau}+\boldsymbol{f})\cdot\boldsymbol{v}+\int_{\Gamma_N}(\boldsymbol{\tau}\boldsymbol{n}-\boldsymbol{T})\cdot\boldsymbol{v}+\int_{\Omega}\mathrm{tr}\boldsymbol{\tau}+l_c(\boldsymbol{\tau})-W^*(\boldsymbol{\tau})\right\}\\
& =-W^*(\boldsymbol{\tau})+\int_{\Omega}\mathrm{tr}\boldsymbol{\tau}+l_c(\boldsymbol{\tau})-I_{SA}(\boldsymbol{\tau})\,,\quad \boldsymbol{\tau}\in\mathcal{E}\,.
\end{split}
\end{equation}
Since $G$ is coercive, concave and upper semi-continuous on reflexive Banach space $\mathcal{E}$, the existence of maximizer $\bar{\boldsymbol{P}}$ is ensured.
\end{proof}

The dual variational formulation, referred to as complementary energy principle, can be rewritten as
\begin{equation}
\begin{split}
& \Pi_c(\bar{\boldsymbol{P}})=\min_{\boldsymbol{\tau}\in SA}\Pi_c(\boldsymbol{\tau})\\
& \Pi_c:\mathcal{E}\to \mathbb{R},\boldsymbol{\tau}\mapsto W^*(\boldsymbol{\tau})-\int_{\Omega}\mathrm{tr}\boldsymbol{\tau}-l_c(\boldsymbol{\tau})\,,
\end{split}
\end{equation}
where $\Pi_c$ is usually called the complementary energy functional.

\begin{lemma}\label{lplc}
For $(\boldsymbol{v},\boldsymbol{\tau}) \in X_D\times SA$,
\begin{equation}\label{lclp}
\langle\boldsymbol{\tau}, \nabla \boldsymbol{v}\rangle=l_p(\boldsymbol{v})+l_c(\boldsymbol{\tau})\,.
\end{equation}
\end{lemma}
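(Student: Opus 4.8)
The plan is to specialize Green's theorem \eqref{Green} to the pair $(\boldsymbol{v},\boldsymbol{\tau})\in X_D\times SA$ and then read off each of the resulting terms using the constraints that define the two sets. Since $X_D\subset X$ and $SA\subset\mathcal{E}$, formula \eqref{Green} applies verbatim and yields
$$\langle\boldsymbol{\tau},\nabla\boldsymbol{v}\rangle=\int_{\Omega}(-\mathrm{div}\,\boldsymbol{\tau})\cdot\boldsymbol{v}+\int_{\Gamma}(\boldsymbol{\tau}\boldsymbol{n})\cdot\boldsymbol{v}\,.$$
I would then use the two constraints built into $SA$: from $\mathrm{div}\,\boldsymbol{\tau}+\boldsymbol{f}=\boldsymbol{0}$ a.e. in $\Omega$ the volume term becomes $\int_{\Omega}\boldsymbol{f}\cdot\boldsymbol{v}$, and from $\boldsymbol{\tau}\boldsymbol{n}=\boldsymbol{T}$ a.e. on $\Gamma_N$, splitting the boundary integral over the disjoint union $\Gamma=\Gamma_D\cup\Gamma_N$ turns the Neumann part into $\int_{\Gamma_N}\boldsymbol{T}\cdot\boldsymbol{v}$. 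Finally I would use the constraint defining $X_D$: any $\boldsymbol{v}\in X_D$ is of the form $\boldsymbol{\psi}-\boldsymbol{X}$ with $\boldsymbol{\psi}=\boldsymbol{X}+\boldsymbol{u}_D$ a.e. on $\Gamma_D$, hence $\boldsymbol{v}=\boldsymbol{u}_D$ a.e. on $\Gamma_D$, so the Dirichlet part of the boundary integral is $\int_{\Gamma_D}(\boldsymbol{\tau}\boldsymbol{n})\cdot\boldsymbol{u}_D=l_c(\boldsymbol{\tau})$.

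Collecting the pieces, $\int_{\Omega}\boldsymbol{f}\cdot\boldsymbol{v}+\int_{\Gamma_N}\boldsymbol{T}\cdot\boldsymbol{v}=l_p(\boldsymbol{v})$ by the definition of $l_p$, and the remaining term is $l_c(\boldsymbol{\tau})$, which is precisely \eqref{lclp}. Note that the identity only involves $\nabla\boldsymbol{v}$, not $\boldsymbol{F}(\boldsymbol{v})=\boldsymbol{I}+\nabla\boldsymbol{v}$, so no extra term from $\boldsymbol{I}$ enters and the computation is a direct substitution.

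The one point that deserves care is the legitimacy of the boundary integrals in \eqref{Green} and of splitting the boundary term into a $\Gamma_D$ contribution and a $\Gamma_N$ contribution: for a general $\boldsymbol{\tau}\in[H(\mathrm{div},\Omega)]^3$ the normal trace $\boldsymbol{\tau}\boldsymbol{n}$ lives only in $[H^{-1/2}(\Gamma)]^3$, paired against the trace of $\boldsymbol{v}$, and such a duality pairing on $\Gamma$ need not decompose over subsets of $\Gamma$. However, on $\Gamma_N$ the membership $\boldsymbol{\tau}\in SA$ forces $\boldsymbol{\tau}\boldsymbol{n}=\boldsymbol{T}\in[L^2(\Gamma_N)]^3$, so that contribution is an honest Lebesgue integral, and on $\Gamma_D$ the functional $l_c$ has already been posited to equal $\int_{\Gamma_D}(\boldsymbol{\tau}\boldsymbol{n})\cdot\boldsymbol{u}_D$ under the standing regularity assumptions on the data; with these identifications the splitting is valid and no further estimates are needed. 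Hence this is the only step with any subtlety, and everything else is immediate.
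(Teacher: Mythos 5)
Your proof is correct and follows exactly the route the paper intends: its own proof is the one-line remark that the identity ``comes out naturally from Green's theorem \eqref{Green} and the definitions of $X_D$ and $SA$,'' and you have simply carried out that substitution explicitly, with a welcome extra comment on the legitimacy of splitting the boundary integral. No gaps.
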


\begin{proof}
This comes out naturally from Green's theorem \eqref{Green} and the definitions of $X_D$ and $SA$.
\end{proof}

\begin{theorem}
The stress solution $\boldsymbol{P}$ of the model problem stated by \eqref{gov1} and \eqref{gov2} is a maximizer of $G$ in $\mathcal{E}$.
\end{theorem}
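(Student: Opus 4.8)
The plan is to establish strong duality at the specific pair $(\boldsymbol{u},\boldsymbol{P})$, where $\boldsymbol{u}\in X_D$ is the displacement part of the solution of \eqref{gov1}--\eqref{gov2}, by combining weak duality with a direct evaluation of $G(\boldsymbol{P})$. First I would record weak duality: for every $(\boldsymbol{v},\boldsymbol{\tau})\in X\times\mathcal{E}$,
\begin{equation*}
G(\boldsymbol{\tau})=\min_{\boldsymbol{w}\in X}\tilde{\mathcal{L}}(\boldsymbol{w},\boldsymbol{\tau})\leq\tilde{\mathcal{L}}(\boldsymbol{v},\boldsymbol{\tau})\leq\max_{\boldsymbol{\mu}\in\mathcal{E}}\tilde{\mathcal{L}}(\boldsymbol{v},\boldsymbol{\mu})=J(\boldsymbol{v})\,,
\end{equation*}
so in particular $G(\boldsymbol{\tau})\leq J(\boldsymbol{u})$ for all $\boldsymbol{\tau}\in\mathcal{E}$. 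It therefore suffices to prove $G(\boldsymbol{P})=J(\boldsymbol{u})$, which then forces $\boldsymbol{P}$ to be a maximizer of $G$.

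Next I would evaluate both sides. Since $\boldsymbol{u}\in X_D$ we have $J(\boldsymbol{u})=\Pi_p(\boldsymbol{u})=W(\boldsymbol{F}(\boldsymbol{u}))-l_p(\boldsymbol{u})$. The equilibrium equation and the Neumann condition in \eqref{gov1} say precisely that $\boldsymbol{P}\in SA$, hence $I_{SA}(\boldsymbol{P})=0$ and, by the formula for $G$ from the previous theorem, $G(\boldsymbol{P})=-W^*(\boldsymbol{P})+\int_{\Omega}\mathrm{tr}\,\boldsymbol{P}+l_c(\boldsymbol{P})$. Applying Lemma~\ref{lplc} to the pair $(\boldsymbol{u},\boldsymbol{P})\in X_D\times SA$ gives $\langle\boldsymbol{P},\nabla\boldsymbol{u}\rangle=l_p(\boldsymbol{u})+l_c(\boldsymbol{P})$, and since $\int_{\Omega}\mathrm{tr}\,\boldsymbol{P}=\langle\boldsymbol{P},\boldsymbol{I}\rangle$ this yields
\begin{equation*}
\int_{\Omega}\mathrm{tr}\,\boldsymbol{P}+l_c(\boldsymbol{P})=\langle\boldsymbol{P},\boldsymbol{I}+\nabla\boldsymbol{u}\rangle-l_p(\boldsymbol{u})=\langle\boldsymbol{P},\boldsymbol{F}(\boldsymbol{u})\rangle-l_p(\boldsymbol{u})\,.
\end{equation*}
Therefore $G(\boldsymbol{P})=-W^*(\boldsymbol{P})+\langle\boldsymbol{P},\boldsymbol{F}(\boldsymbol{u})\rangle-l_p(\boldsymbol{u})$. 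Finally I would invoke the constitutive relation \eqref{gov2}, which is exactly the equality case of the Fenchel--Young inequality \eqref{CRE1} and gives $\langle\boldsymbol{P},\boldsymbol{F}(\boldsymbol{u})\rangle-W^*(\boldsymbol{P})=W(\boldsymbol{F}(\boldsymbol{u}))$. Substituting, $G(\boldsymbol{P})=W(\boldsymbol{F}(\boldsymbol{u}))-l_p(\boldsymbol{u})=J(\boldsymbol{u})$, and combining with weak duality, $G(\boldsymbol{\tau})\leq J(\boldsymbol{u})=G(\boldsymbol{P})$ for every $\boldsymbol{\tau}\in\mathcal{E}$, so $\boldsymbol{P}$ maximizes $G$ on $\mathcal{E}$ (and, as a by-product, $(\boldsymbol{u},\boldsymbol{P})$ is a saddle point of $\tilde{\mathcal{L}}$ and there is no duality gap).

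The argument itself is a short chain of substitutions, so the only point demanding care — the main obstacle, such as it is — is the well-posedness bookkeeping in the mixed function-space setting: one must check that $\boldsymbol{F}(\boldsymbol{u})$ pairs with $\boldsymbol{P}$ through $\langle\cdot,\cdot\rangle$ and that Green's identity \eqref{Green}, hence Lemma~\ref{lplc}, genuinely applies to $(\boldsymbol{u},\boldsymbol{P})$ given the definitions of $X_D$, $SA$ and $\mathcal{E}=[H(\mathrm{div},\Omega)]^3$. Once this regularity is in place, the equality $G(\boldsymbol{P})=J(\boldsymbol{u})$ and the conclusion follow immediately.
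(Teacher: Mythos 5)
Your proof is correct, but it takes a different route from the paper's. The paper argues entirely inside the dual problem: it reduces the claim to showing that $\boldsymbol{P}\in SA$ minimizes $\Pi_c$ over $SA$, writes down the first-order extremality identity $\langle\boldsymbol{\tau}-\boldsymbol{P},\boldsymbol{I}+\nabla\boldsymbol{u}\rangle-\int_\Omega\mathrm{tr}(\boldsymbol{\tau}-\boldsymbol{P})-l_c(\boldsymbol{\tau}-\boldsymbol{P})=0$ for all $\boldsymbol{\tau}\in SA$, and obtains it from Lemma~\ref{lplc}; implicitly it then uses convexity of $W^*$ together with $\boldsymbol{F}(\boldsymbol{u})\in\partial W^*(\boldsymbol{P})$ (the subgradient form of \eqref{gov2}) to pass from the vanishing first variation to global minimality. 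You instead prove strong duality at the exact pair: weak duality $G(\boldsymbol{\tau})\le J(\boldsymbol{v})$ combined with the direct computation $G(\boldsymbol{P})=J(\boldsymbol{u})$, the latter resting on exactly the same two ingredients (Lemma~\ref{lplc} and the equality case of Fenchel--Young in \eqref{gov2}). Your version has the advantage of not needing the convexity/subdifferential step that the paper leaves unstated, and it yields the absence of a duality gap --- i.e.\ the content of the paper's subsequent saddle-point theorem and of \eqref{pippic} --- as a by-product; the mild price is that your weak-duality step uses $\max_{\boldsymbol{\mu}\in\mathcal{E}}\tilde{\mathcal{L}}(\boldsymbol{v},\boldsymbol{\mu})=J(\boldsymbol{v})$, i.e.\ $W^{**}=W$, which the paper does assert in its setup but which the paper's own proof of this particular theorem does not need to invoke. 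Your closing remark about checking that Green's identity and the pairing $\langle\boldsymbol{P},\boldsymbol{F}(\boldsymbol{u})\rangle$ make sense for $X_D\times SA$ is well taken; the paper handles this at the same level of informality.
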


\begin{proof}
All needed to prove is $\boldsymbol{P}\in SA$ is exactly a minimizer of $\Pi_c$ in $SA$, which can be completed once the following identity, an extreme condition
\begin{equation}
\langle \boldsymbol{\tau}-\boldsymbol{P},\boldsymbol{I}+\nabla \boldsymbol{u}\rangle-\int_{\Omega}\mathrm{tr}(\boldsymbol{\tau}-\boldsymbol{P})-l_c(\boldsymbol{\tau}-\boldsymbol{P})=0 \quad \forall \boldsymbol{\tau}\in SA
\end{equation}
is verified.

This identity can be directly obtained from Lemma \ref{lplc}.
\end{proof}

\begin{theorem}
$(\boldsymbol{u},\boldsymbol{P})$ is a saddle point of $\tilde{\mathcal{L}}$ in $X\times\mathcal{E}$, i.e.
\begin{equation}
\min_{\boldsymbol{v}\in X}\max_{\boldsymbol{\tau}\in\mathcal{E}}~\tilde{\mathcal{L}}(\boldsymbol{v},\boldsymbol{\tau})=\tilde{\mathcal{L}}(\boldsymbol{u},\boldsymbol{P})=\max_{\boldsymbol{\tau}\in\mathcal{E}}\min_{\boldsymbol{v}\in X}~\tilde{\mathcal{L}}(\boldsymbol{v},\boldsymbol{\tau})\,.
\end{equation}
\end{theorem}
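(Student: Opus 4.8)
The plan is to establish directly the two inequalities that characterise a saddle point of $\tilde{\mathcal{L}}$, namely
\[
\tilde{\mathcal{L}}(\boldsymbol{u},\boldsymbol{\tau})\le\tilde{\mathcal{L}}(\boldsymbol{u},\boldsymbol{P})\le\tilde{\mathcal{L}}(\boldsymbol{v},\boldsymbol{P})\qquad\forall\,(\boldsymbol{v},\boldsymbol{\tau})\in X\times\mathcal{E},
\]
and then to read off the asserted chain of equalities. Once these hold, the left inequality gives $\tilde{\mathcal{L}}(\boldsymbol{u},\boldsymbol{P})=\max_{\boldsymbol{\tau}\in\mathcal{E}}\tilde{\mathcal{L}}(\boldsymbol{u},\boldsymbol{\tau})\ge\min_{\boldsymbol{v}\in X}\max_{\boldsymbol{\tau}\in\mathcal{E}}\tilde{\mathcal{L}}(\boldsymbol{v},\boldsymbol{\tau})$, the right one gives $\tilde{\mathcal{L}}(\boldsymbol{u},\boldsymbol{P})=\min_{\boldsymbol{v}\in X}\tilde{\mathcal{L}}(\boldsymbol{v},\boldsymbol{P})\le\max_{\boldsymbol{\tau}\in\mathcal{E}}\min_{\boldsymbol{v}\in X}\tilde{\mathcal{L}}(\boldsymbol{v},\boldsymbol{\tau})$, and since the weak-duality bound $\max_{\boldsymbol{\tau}}\min_{\boldsymbol{v}}\tilde{\mathcal{L}}\le\min_{\boldsymbol{v}}\max_{\boldsymbol{\tau}}\tilde{\mathcal{L}}$ always holds, all four quantities must then coincide with $\tilde{\mathcal{L}}(\boldsymbol{u},\boldsymbol{P})$.

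To obtain the left inequality I would fix $\boldsymbol{\tau}\in\mathcal{E}$ and use that $\boldsymbol{u}\in X_D$ (it lies in the effective domain of $J$), so the indicator term drops and $\tilde{\mathcal{L}}(\boldsymbol{u},\boldsymbol{\tau})=\langle\boldsymbol{\tau},\boldsymbol{F}(\boldsymbol{u})\rangle-W^*(\boldsymbol{\tau})-l_p(\boldsymbol{u})$. The Fenchel--Young inequality \eqref{CRE1} at $(\boldsymbol{F}(\boldsymbol{u}),\boldsymbol{\tau})$ then bounds $\langle\boldsymbol{\tau},\boldsymbol{F}(\boldsymbol{u})\rangle-W^*(\boldsymbol{\tau})\le W(\boldsymbol{F}(\boldsymbol{u}))$, while the constitutive relation \eqref{gov2} is precisely the identity $W(\boldsymbol{F}(\boldsymbol{u}))=\langle\boldsymbol{P},\boldsymbol{F}(\boldsymbol{u})\rangle-W^*(\boldsymbol{P})$; combining them yields $\tilde{\mathcal{L}}(\boldsymbol{u},\boldsymbol{\tau})\le\tilde{\mathcal{L}}(\boldsymbol{u},\boldsymbol{P})$. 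In short, \eqref{gov2} is exactly the statement that $\boldsymbol{P}$ maximises $\tilde{\mathcal{L}}(\boldsymbol{u},\cdot)$ over $\mathcal{E}$.

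For the right inequality I would fix $\boldsymbol{v}\in X$. If $\boldsymbol{v}\notin X_D$ the value $\tilde{\mathcal{L}}(\boldsymbol{v},\boldsymbol{P})=+\infty$ and there is nothing to check. If $\boldsymbol{v}\in X_D$, I would write $\boldsymbol{F}(\boldsymbol{v})=\boldsymbol{I}+\nabla\boldsymbol{v}$, note $\langle\boldsymbol{P},\boldsymbol{I}\rangle=\int_{\Omega}\mathrm{tr}\,\boldsymbol{P}$, and invoke Lemma~\ref{lplc} (applicable because $\boldsymbol{P}\in SA$ by \eqref{gov1} and $\boldsymbol{v}\in X_D$) to get $\langle\boldsymbol{P},\nabla\boldsymbol{v}\rangle=l_p(\boldsymbol{v})+l_c(\boldsymbol{P})$. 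These identities reduce $\tilde{\mathcal{L}}(\boldsymbol{v},\boldsymbol{P})$ to $\int_{\Omega}\mathrm{tr}\,\boldsymbol{P}+l_c(\boldsymbol{P})-W^*(\boldsymbol{P})$, a quantity independent of $\boldsymbol{v}$; hence $\tilde{\mathcal{L}}(\boldsymbol{v},\boldsymbol{P})=\tilde{\mathcal{L}}(\boldsymbol{u},\boldsymbol{P})$ for every $\boldsymbol{v}\in X_D$, and in particular $\tilde{\mathcal{L}}(\boldsymbol{u},\boldsymbol{P})\le\tilde{\mathcal{L}}(\boldsymbol{v},\boldsymbol{P})$ for all $\boldsymbol{v}\in X$, which finishes the two inequalities.

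I do not expect a genuine obstacle: once the two observations above are isolated the argument is purely formal. The points to watch are the bookkeeping around the indicator $I_{X_D}$ and the tacit convention that $W$ and $W^*$ are treated as functionals on $\mathcal{E}$, so that \eqref{CRE1} and \eqref{gov2} apply verbatim at $\boldsymbol{F}(\boldsymbol{u})$ and $\boldsymbol{P}$. It is also worth stressing that this proof uses none of the coercivity/semicontinuity machinery invoked earlier to produce the minimiser and maximiser: the constitutive relation \eqref{gov2} alone pins down the saddle point, and the equality of potential and negated complementary energy, $\Pi_p(\boldsymbol{u})=-\Pi_c(\boldsymbol{P})$, comes out as a by-product since both equal $\tilde{\mathcal{L}}(\boldsymbol{u},\boldsymbol{P})$.
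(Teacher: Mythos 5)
Your proof is correct, but it is organized differently from the paper's. The paper does not verify the saddle-point inequalities at all: it identifies $\min_{\boldsymbol{v}}\max_{\boldsymbol{\tau}}\tilde{\mathcal{L}}$ with $\min_{\boldsymbol{v}}J(\boldsymbol{v})=\Pi_p(\boldsymbol{u})$ and $\max_{\boldsymbol{\tau}}\min_{\boldsymbol{v}}\tilde{\mathcal{L}}$ with $\max_{\boldsymbol{\tau}}G(\boldsymbol{\tau})=-\Pi_c(\boldsymbol{P})$ by leaning on the two preceding theorems (existence of the minimizer of $J$ and the fact that $\boldsymbol{P}$ maximizes $G$), and then closes the gap by checking the single chain $\Pi_p(\boldsymbol{u})=\tilde{\mathcal{L}}(\boldsymbol{u},\boldsymbol{P})=-\Pi_c(\boldsymbol{P})$ via the constitutive relation \eqref{gov2} and Lemma~\ref{lplc}. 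You instead prove the two saddle-point inequalities from scratch --- Fenchel--Young plus \eqref{gov2} for the maximization in $\boldsymbol{\tau}$, and Lemma~\ref{lplc} showing $\tilde{\mathcal{L}}(\cdot,\boldsymbol{P})$ is constant on $X_D$ for the minimization in $\boldsymbol{v}$ --- and then invoke weak duality. The underlying computations are the same (both hinge on \eqref{gov2} and \eqref{lclp}), but your route is more self-contained: it does not presuppose that $\boldsymbol{u}$ minimizes $J$ or that $\boldsymbol{P}$ maximizes $G$, and in fact recovers both as corollaries of the saddle-point property, together with the energy identity \eqref{pippic}. What the paper's version buys in exchange is brevity given the earlier theorems and an explicit statement of $\Pi_p(\boldsymbol{u})=-\Pi_c(\boldsymbol{P})$, which is reused later (Remark~3.2). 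Your observation that $\tilde{\mathcal{L}}(\boldsymbol{v},\boldsymbol{P})$ is literally independent of $\boldsymbol{v}$ on $X_D$ is a slightly sharper fact than the extremality condition the paper verifies, and is worth keeping.
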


\begin{proof}
Evidently,
\begin{equation*}
\begin{split}
& \min_{\boldsymbol{v}\in X}\max_{\boldsymbol{\tau}\in\mathcal{E}}~\tilde{\mathcal{L}}(\boldsymbol{v},\boldsymbol{\tau})=\min_{\boldsymbol{v}\in X}~ J(\boldsymbol{v})=\Pi_p(\boldsymbol{u})\,,\\
& \max_{\boldsymbol{\tau}\in\mathcal{E}}\min_{\boldsymbol{v}\in X}~\tilde{\mathcal{L}}(\boldsymbol{v},\boldsymbol{\tau})=\max_{\boldsymbol{\tau}\in\mathcal{E}}~G(\boldsymbol{\tau})=-\Pi_c(\boldsymbol{P})\,.
\end{split}
\end{equation*}
With the constitutive relation \eqref{gov2} and the identity \eqref{lclp}, one has
\begin{equation}\label{pippic}
\Pi_p(\boldsymbol{u})=\tilde{\mathcal{L}}(\boldsymbol{u},\boldsymbol{P})=-\Pi_c(\boldsymbol{P})\,.
\end{equation}
\end{proof}

\section{A generalized constitutive relation error}

In this section, a generalized constitutive relation error (GCRE) is defined, which is applicable to a wide range of problems. Some important properties of the GCRE are obtained. The constitutive relation error (CRE) for the hyperelastic problem is also discussed as a special case.

\subsection{Definition of GCRE}
\begin{definition}[GCRE]
A generalized constitutive relation error (GCRE) $\Psi:\mathcal{U}\times\mathcal{T}\to\mathbb{R}$ is defined as follows
\begin{equation}\label{gcre}
 \Psi(\hat{U},\hat{P})=\phi(\hat{U})+\phi^*(\hat{P})-\mathcal{B}(\hat{U},\hat{P})\geq 0\,, \quad (\hat{U},\hat{P})\in \mathcal{U}\times\mathcal{T}\,,
\end{equation}
where $\mathcal{U}\subset\mathcal{H}$ and $\mathcal{T}\subset\mathcal{S}$ are closed convex nonempty subsets of reflexive Banach spaces $\mathcal{H}$ and $\mathcal{S}$;  $\mathcal{B}(\cdot,\cdot):\mathcal{H}\times \mathcal{S}\to \mathbb{R}$ is a continuous bilinear form; $\phi:\mathcal{H}\to \mathbb{R}\cup\{+\infty\}$ and $\phi^*:\mathcal{S}\to \mathbb{R}\cup\{+\infty\}$ are generalized potential and complementary energy functionals, respectively, both being convex, lower semi-continuous and proper. The equality $\Psi(\hat{U},\hat{P})=0$ is equivalent to the satisfaction of the constitutive relation.
\end{definition}

The definition of the GCRE can be considered as a natural generalization of the form of Fenchel-Young inequality $\eqref{CRE1}$ for hyperelasticity. Suppose that the governing equations of the problem can be given in the following form: find $({U},{P})\in \mathcal{H}\times \mathcal{S}$ such that
\begin{equation}
U\in \mathcal{U}\,,\quad P\in\mathcal{T}\,, \quad \Psi({U},{P})=0\,,
\end{equation}
which are called compatibility condition, equilibrium condition and constitutive relation, respectively. The existence and uniqueness of the exact solution pair $(U,P)$ is assumed to be ensured.

In order to evaluate the errors between admissible fields $(\hat{U},\hat{P})$ and exact fields $({U},{P})$, two error functionals are defined.

\begin{definition}[Error functionals]\label{errorfunc}
Error functionls $\bar{\phi}_U:\mathcal{H}\to \mathbb{R}$ and $\bar{\phi}^*_P:\mathcal{S}\to \mathbb{R}$ are introduced as follows:
\begin{equation}\label{errdef}
\begin{split}
& \bar{\phi}_{(U,P)}(e)=\phi(U+e)-\phi(U)-\mathcal{B}(e,P)\,,\quad  e\in \mathcal{H}\,,\\
& \bar{\phi}^*_{(U,P)}(r)=\phi^*(P+r)-\phi^*(P)-\mathcal{B}(U,r)\,,\quad  r \in \mathcal{S}\,,
\end{split}
\end{equation}
where $({U},{P})$ is the exact solution pair of the problem under consideration.
\end{definition}

\begin{theorem}
The error functionals $\bar{\phi}_{(U,P)}$ and $\bar{\phi}^*_{(U,P)}$ have the following properties:
\begin{equation}\label{assum}
\bar{\phi}_{(U,P)}(\hat{U}-U)\geq 0\,,\quad \bar{\phi}^*_{(U,P)}(\hat{P}-P)\geq 0\,,\quad \forall(\hat{U},\hat{P})\in\mathcal{U}\times\mathcal{T}\,.
\end{equation}
\end{theorem}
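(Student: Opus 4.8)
The plan is to deduce both inequalities from the Fenchel–Young inequality $\phi(y)+\phi^*(y^*)\ge \mathcal B(y,y^*)$ applied at a judiciously chosen pair, combined with the fact that the exact pair $(U,P)$ satisfies the constitutive relation with equality, i.e. $\phi(U)+\phi^*(P)=\mathcal B(U,P)$, together with the admissibility constraints $U,\hat U\in\mathcal U$ and $P,\hat P\in\mathcal T$. First I would treat $\bar\phi_{(U,P)}(\hat U-U)$. Expanding the definition gives
\begin{equation*}
\bar\phi_{(U,P)}(\hat U-U)=\phi(\hat U)-\phi(U)-\mathcal B(\hat U-U,P)=\phi(\hat U)+\phi^*(P)-\mathcal B(\hat U,P)-\bigl(\phi(U)+\phi^*(P)-\mathcal B(U,P)\bigr).
\end{equation*}
The bracketed term vanishes because $(U,P)$ satisfies the constitutive relation, so $\bar\phi_{(U,P)}(\hat U-U)=\Psi(\hat U,P)$, and this is $\ge 0$ by the Fenchel–Young inequality \eqref{gcre}, since $\hat U\in\mathcal U$ and $P\in\mathcal T$. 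Symmetrically, $\bar\phi^*_{(U,P)}(\hat P-P)=\Psi(U,\hat P)\ge 0$ because $U\in\mathcal U$ and $\hat P\in\mathcal T$.

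So the computation itself is short; the only real point requiring care is the assertion that $\phi(U)+\phi^*(P)=\mathcal B(U,P)$, that is, that the exact solution attains equality in Fenchel–Young. This is precisely the statement ``$\Psi(U,P)=0$ is equivalent to the satisfaction of the constitutive relation'' built into the definition of the GCRE, so it is available by hypothesis; I would simply invoke it and note that $(U,P)\in\mathcal U\times\mathcal T$ by the compatibility and equilibrium conditions. One should also remark that $\phi(\hat U)<+\infty$ and $\phi^*(\hat P)<+\infty$ are implicit in the admissibility of $(\hat U,\hat P)$ (otherwise the error functionals would not be real-valued as declared in Definition \ref{errorfunc}), so no $+\infty-\infty$ ambiguity arises in the rearrangement.

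The main obstacle, such as it is, is purely expository: making transparent that $\bar\phi_{(U,P)}(\hat U-U)$ is exactly $\Psi(\hat U,P)$ and $\bar\phi^*_{(U,P)}(\hat P-P)$ is exactly $\Psi(U,\hat P)$, so that the nonnegativity is immediate from \eqref{gcre}. I would state these two identities as the crux of the proof and then close by applying the GCRE inequality at $(\hat U,P)$ and at $(U,\hat P)$ respectively. No deeper convex-analytic machinery (subdifferentials, biconjugation) is needed here beyond what the definition already packages.
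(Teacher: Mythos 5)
Your argument is correct and is essentially the paper's own proof: the authors likewise reduce the claim to $\Psi(\hat U,P)\geq \Psi(U,P)=0$ and $\Psi(U,\hat P)\geq \Psi(U,P)=0$, which is exactly your identification of $\bar\phi_{(U,P)}(\hat U-U)$ with $\Psi(\hat U,P)$ and of $\bar\phi^*_{(U,P)}(\hat P-P)$ with $\Psi(U,\hat P)$ followed by the nonnegativity in \eqref{gcre}. Your extra remarks on finiteness and on where the equality $\phi(U)+\phi^*(P)=\mathcal B(U,P)$ comes from are sound elaborations of steps the paper leaves implicit.
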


\begin{proof}
\eqref{gcre} gives
\begin{equation*}
\begin{split}
& \Psi(\hat{U},{P})\geq \Psi({U},{P})=0\quad \forall \hat{U}\in\mathcal{U}\,,\\
& \Psi({U},\hat{P})\geq \Psi({U},{P})=0\quad \forall \hat{P}\in\mathcal{T}\,,
\end{split}
\end{equation*}
from which \eqref{assum} is obtained directly.
\end{proof}

\subsection{The relation between GCRE and error functionals}
\begin{theorem}
The GCRE and the error functionals in Definition \ref{errorfunc} are interlinked by the following identity:
\begin{equation}\label{relation}
\Psi(\hat{U},\hat{P})=\bar{\phi}_{(U,P)}(\hat{U}-U)+\bar{\phi}_{(U,P)}^*(\hat{P}-P)-\mathcal{B}(\hat{U}-U,\hat{P}-P)\,,\quad (\hat{U},\hat{P})\in \mathcal{U}\times\mathcal{T}\,.
\end{equation}
\end{theorem}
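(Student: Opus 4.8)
\emph{Proof proposal.} The plan is to prove \eqref{relation} by a direct substitution of the definitions \eqref{errdef} into its right-hand side, using only the bilinearity of $\mathcal{B}$ together with the fact that the exact pair $(U,P)$ is compatible, equilibrated and satisfies the constitutive relation, so that $\Psi(U,P)=0$ by \eqref{gcre}. Note first that all terms appearing below are finite: $\hat U\in\mathcal{U}$, $\hat P\in\mathcal{T}$, and $\Psi$, $\phi$, $\phi^*$ are real-valued there, so $\bar\phi_{(U,P)}$ and $\bar\phi^*_{(U,P)}$ are well defined on the relevant increments.

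First I would evaluate the two error functionals at $e=\hat U-U$ and $r=\hat P-P$. Since $U+(\hat U-U)=\hat U$ and $P+(\hat P-P)=\hat P$, the definitions \eqref{errdef} give
\[
\bar\phi_{(U,P)}(\hat U-U)=\phi(\hat U)-\phi(U)-\mathcal{B}(\hat U-U,P),\qquad
\bar\phi^*_{(U,P)}(\hat P-P)=\phi^*(\hat P)-\phi^*(P)-\mathcal{B}(U,\hat P-P).
\]
Adding these and subtracting the cross term $\mathcal{B}(\hat U-U,\hat P-P)$, the analytic part is immediately $\phi(\hat U)-\phi(U)+\phi^*(\hat P)-\phi^*(P)$, and it remains to simplify the three bilinear contributions $-\mathcal{B}(\hat U-U,P)-\mathcal{B}(U,\hat P-P)-\mathcal{B}(\hat U-U,\hat P-P)$.

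Next I would expand each of these three bilinear terms by linearity in both arguments. The terms $\mathcal{B}(\hat U,P)$ and $\mathcal{B}(U,\hat P)$ each occur twice with opposite signs and cancel, the three copies of $\mathcal{B}(U,P)$ combine to a single $\mathcal{B}(U,P)$, and the only surviving genuinely "mixed'' term is $-\mathcal{B}(\hat U,\hat P)$; hence the three bilinear contributions collapse to $\mathcal{B}(U,P)-\mathcal{B}(\hat U,\hat P)$. This elementary cancellation of the four cross terms is the only place where any care is needed, and it is pure bookkeeping rather than a real obstacle. Putting everything together, the right-hand side of \eqref{relation} equals
\[
\bigl[\phi(\hat U)+\phi^*(\hat P)-\mathcal{B}(\hat U,\hat P)\bigr]-\bigl[\phi(U)+\phi^*(P)-\mathcal{B}(U,P)\bigr]=\Psi(\hat U,\hat P)-\Psi(U,P).
\]
Finally, since $(U,P)$ is the exact solution, $\Psi(U,P)=0$ by \eqref{gcre}, and \eqref{relation} follows.
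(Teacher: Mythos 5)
Your proof is correct and follows essentially the same route as the paper's: both substitute the definitions \eqref{errdef}, use the bilinearity of $\mathcal{B}$ to collapse the cross terms, and invoke $\Psi(U,P)=0$ to conclude. The only cosmetic difference is that you identify the result as $\Psi(\hat U,\hat P)-\Psi(U,P)$ at the end, whereas the paper substitutes $\phi(\hat U)+\phi^*(\hat P)=\Psi(\hat U,\hat P)+\mathcal{B}(\hat U,\hat P)$ and $\phi(U)+\phi^*(P)=\mathcal{B}(U,P)$ up front; the algebra is identical.
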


\begin{proof}
From the definitions of error functionals \eqref{errdef}, one has
\begin{equation*}
\bar{\phi}_{(U,P)}(\hat{U}-U)+\bar{\phi}_{(U,P)}^*(\hat{P}-P)=  (\phi(\hat{U})+\phi^*(\hat{P}))- (\phi({U})+\phi^*({P}))
 -\mathcal{B}(\hat{U}-U,P)-\mathcal{B}(U,\hat{P}-P)\,.
\end{equation*}
Considering the fact that
\begin{equation*}
\phi(\hat{U})+\phi^*(\hat{P})= \Psi(\hat{U},\hat{P})+\mathcal{B}(\hat{U},\hat{P})
\end{equation*}
resulting from \eqref{gcre} and that
\begin{equation*}
\phi({U})+\phi^*({P})= \mathcal{B}({U},{P})
\end{equation*}
since the exact solution pair $(U,P)$ satisfies the constitutive relation, i.e. $\Psi({U},{P})=0$, regrouping the terms of bilinear form $\mathcal{B}$ yields the identity \eqref{relation} directly.
\end{proof}

\subsection{Overview of the CRE for hyperelasticity}

As a special case of the GCRE, the CRE for hyperelasticity is discussed in this subsection. The CRE form of hyperelasticity is naturally defined on $X\times \mathcal{E}$ as the Fenchel-Young inequality \eqref{CRE1}, i.e.
\begin{equation}\label{CREhyp}
\Psi(\boldsymbol{v},\boldsymbol{\tau})=W(\boldsymbol{F}(\boldsymbol{v}))+W^*(\boldsymbol{\tau})-\langle
\boldsymbol{\tau},\boldsymbol{F}(\boldsymbol{v})\rangle \geq 0\,,\quad  (\boldsymbol{v},\boldsymbol{\tau})\in X\times \mathcal{E}\,,
\end{equation}
and one can take
\begin{equation}
\phi(\boldsymbol{v})=W(\boldsymbol{F}(\boldsymbol{v}))\,,\quad \phi^*(\boldsymbol{\tau})=W^*(\boldsymbol{\tau})-\int_{\Omega}\mathrm{tr}\boldsymbol{\tau}\,,\quad
\mathcal{B}(\boldsymbol{v},\boldsymbol{\tau})=\langle \boldsymbol{\tau},\nabla \boldsymbol{v} \rangle\,,\quad  (\boldsymbol{v},\boldsymbol{\tau})\in X\times \mathcal{E}\,.
\end{equation}

The model problem stated by \eqref{gov1} and \eqref{gov2} is then simply written as
\begin{equation}
\boldsymbol{u}\in {X_D}\,,\quad \boldsymbol{P}\in {SA}\,, \quad \Psi(\boldsymbol{u},\boldsymbol{P})=0\,.
\end{equation}

The error functionals are correspondingly introduced as
\begin{equation}
\begin{split}
& \bar{\phi}_{(\boldsymbol{u},\boldsymbol{P})}(\boldsymbol{e})=W(\boldsymbol{F}(\boldsymbol{u}+\boldsymbol{e}))-W(\boldsymbol{F}(\boldsymbol{u}))-\langle
\boldsymbol{P},\nabla \boldsymbol{e}\rangle\,,\quad  \boldsymbol{e}\in X\,,\\
& \bar{\phi}^*_{(\boldsymbol{u},\boldsymbol{P})}(\boldsymbol{r})=W^*(\boldsymbol{P}+\boldsymbol{r})-W^*(\boldsymbol{P})-\langle
\boldsymbol{r},\boldsymbol{F}( \boldsymbol{u})\rangle\,,\quad  \boldsymbol{r}\in \mathcal{E}\,.
\end{split}
\end{equation}

\begin{theorem}
The CRE for hyperelasticity can be represented as the sum of two error functionals of the admissible solutions $(\hat{\boldsymbol{u}},\hat{\boldsymbol{P}})\in X_D\times SA$, i.e.
\begin{equation}\label{hypsplit}
\Psi(\hat{\boldsymbol{u}},\hat{\boldsymbol{P}})=  \bar{\phi}_{(\boldsymbol{u},\boldsymbol{P})}(\hat{\boldsymbol{u}}-\boldsymbol{u})+\bar{\phi}^*_{(\boldsymbol{u},\boldsymbol{P})}(\hat{\boldsymbol{P}}-\boldsymbol{P})\geq
\begin{cases}
\bar{\phi}_{(\boldsymbol{u},\boldsymbol{P})}(\hat{\boldsymbol{u}}-\boldsymbol{u})\,,\\
\bar{\phi}^*_{(\boldsymbol{u},\boldsymbol{P})}(\hat{\boldsymbol{P}}-\boldsymbol{P})\,.
\end{cases}
\end{equation}
\end{theorem}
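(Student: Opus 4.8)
The plan is to derive \eqref{hypsplit} as a direct specialization of the general identity \eqref{relation}, the only substantive point being that in this concrete setting the coupling term built from the bilinear form $\mathcal{B}$ vanishes. Recall that here $\mathcal{H}=X$, $\mathcal{S}=\mathcal{E}$, $\mathcal{U}=X_D$, $\mathcal{T}=SA$, $\phi(\boldsymbol{v})=W(\boldsymbol{F}(\boldsymbol{v}))$, $\phi^*(\boldsymbol{\tau})=W^*(\boldsymbol{\tau})-\int_{\Omega}\mathrm{tr}\,\boldsymbol{\tau}$ and $\mathcal{B}(\boldsymbol{v},\boldsymbol{\tau})=\langle\boldsymbol{\tau},\nabla\boldsymbol{v}\rangle$, with the exact solution $(\boldsymbol{u},\boldsymbol{P})\in X_D\times SA$ satisfying $\Psi(\boldsymbol{u},\boldsymbol{P})=0$.

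First I would instantiate \eqref{relation} with $\hat U=\hat{\boldsymbol{u}}$ and $\hat P=\hat{\boldsymbol{P}}$, which reads
\[
\Psi(\hat{\boldsymbol{u}},\hat{\boldsymbol{P}})=\bar{\phi}_{(\boldsymbol{u},\boldsymbol{P})}(\hat{\boldsymbol{u}}-\boldsymbol{u})+\bar{\phi}^*_{(\boldsymbol{u},\boldsymbol{P})}(\hat{\boldsymbol{P}}-\boldsymbol{P})-\langle\hat{\boldsymbol{P}}-\boldsymbol{P},\nabla(\hat{\boldsymbol{u}}-\boldsymbol{u})\rangle .
\]
It then remains to show that the last term is zero. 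I would expand it bilinearly into the four pairings $\langle\cdot,\nabla\cdot\rangle$ of $\{\hat{\boldsymbol{P}},\boldsymbol{P}\}$ against $\{\hat{\boldsymbol{u}},\boldsymbol{u}\}$; since $\hat{\boldsymbol{u}},\boldsymbol{u}\in X_D$ and $\hat{\boldsymbol{P}},\boldsymbol{P}\in SA$, every such pairing falls under the hypotheses of Lemma \ref{lplc}, so each equals $l_p(\cdot)+l_c(\cdot)$ evaluated at the corresponding arguments, and by linearity of $l_p$ and $l_c$ the four contributions cancel. Equivalently, one may invoke Green's theorem \eqref{Green} directly: $\mathrm{div}(\hat{\boldsymbol{P}}-\boldsymbol{P})=\boldsymbol{0}$ in $\Omega$, $(\hat{\boldsymbol{P}}-\boldsymbol{P})\boldsymbol{n}=\boldsymbol{0}$ on $\Gamma_N$, and $\hat{\boldsymbol{u}}-\boldsymbol{u}=\boldsymbol{0}$ on $\Gamma_D$, so the volume term and both boundary contributions in \eqref{Green} drop out. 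This establishes the equality in \eqref{hypsplit}.

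For the two lower bounds I would simply appeal to the positivity of the error functionals already proved: applying \eqref{assum} with $\mathcal{U}=X_D$ and $\mathcal{T}=SA$ gives $\bar{\phi}_{(\boldsymbol{u},\boldsymbol{P})}(\hat{\boldsymbol{u}}-\boldsymbol{u})\ge 0$ and $\bar{\phi}^*_{(\boldsymbol{u},\boldsymbol{P})}(\hat{\boldsymbol{P}}-\boldsymbol{P})\ge 0$ for every $(\hat{\boldsymbol{u}},\hat{\boldsymbol{P}})\in X_D\times SA$; hence each of the two summands in the identity just obtained is dominated by their sum $\Psi(\hat{\boldsymbol{u}},\hat{\boldsymbol{P}})$, which is precisely the two cases listed in \eqref{hypsplit}.

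I do not anticipate any genuine obstacle here: once the cross term is shown to vanish the rest is mechanical. The vanishing of that term is itself nothing more than the classical orthogonality, expressed by Green's theorem, between kinematically admissible displacement increments (zero Dirichlet trace) and self-equilibrated stress fields (divergence-free, zero Neumann traction). The one point requiring care is to check that all four pairings arising in the expansion of the cross term indeed satisfy the hypotheses of Lemma \ref{lplc}, which they do because the admissibility of $\hat{\boldsymbol{u}},\boldsymbol{u}$ in $X_D$ and of $\hat{\boldsymbol{P}},\boldsymbol{P}$ in $SA$ is assumed.
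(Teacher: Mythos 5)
Your proposal is correct and follows essentially the same route as the paper: specialize the general identity \eqref{relation}, kill the cross term $\langle\hat{\boldsymbol{P}}-\boldsymbol{P},\nabla(\hat{\boldsymbol{u}}-\boldsymbol{u})\rangle$ via Green's theorem \eqref{Green} (equivalently, Lemma \ref{lplc} applied to each of the four admissible pairings), and conclude with the non-negativity of the two error functionals from \eqref{assum}. Your expansion into four pairings is a slightly more explicit bookkeeping of the same cancellation the paper records as $\langle\hat{\boldsymbol{P}},\nabla(\hat{\boldsymbol{u}}-\boldsymbol{u})\rangle=\langle\boldsymbol{P},\nabla(\hat{\boldsymbol{u}}-\boldsymbol{u})\rangle=l_p(\hat{\boldsymbol{u}}-\boldsymbol{u})$.
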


\begin{proof}
Eq.\ \eqref{relation} gives the following identity for this hyperelastic problem:
\begin{equation}
\Psi(\hat{\boldsymbol{u}},\hat{\boldsymbol{P}})= \bar{\phi}_{(\boldsymbol{u},\boldsymbol{P})}(\hat{\boldsymbol{u}}-\boldsymbol{u})+\bar{\phi}^*_{(\boldsymbol{u},\boldsymbol{P})}(\hat{\boldsymbol{P}}-\boldsymbol{P})
 -\langle
\hat{\boldsymbol{P}}-\boldsymbol{P},\nabla (\hat{\boldsymbol{u}}-\boldsymbol{u})\rangle\,.
\end{equation}

Since $(\hat{\boldsymbol{u}},\boldsymbol{u})\in X_D \times X_D$ and  $(\hat{\boldsymbol{P}},\boldsymbol{P})\in SA\times SA$, \eqref{Green} gives
\begin{equation*}
\langle
\hat{\boldsymbol{P}},\nabla (\hat{\boldsymbol{u}}-\boldsymbol{u})\rangle
=\langle
\boldsymbol{P},\nabla (\hat{\boldsymbol{u}}-\boldsymbol{u})\rangle
=l_p(\hat{\boldsymbol{u}}-\boldsymbol{u})\,,
\end{equation*}
then \eqref{hypsplit} is obtained in view of the fact that $\bar{\phi}_{(\boldsymbol{u},\boldsymbol{P})}(\hat{\boldsymbol{u}}-\boldsymbol{u})\geq 0$ and $\bar{\phi}^*_{(\boldsymbol{u},\boldsymbol{P})}(\hat{\boldsymbol{P}}-\boldsymbol{P})\geq 0$.
\end{proof}

\vspace{2mm}
\noindent \textbf{Remark 3.1.} In the case of linear elasticity under small deformation, the difference between deformed and undeformed configurations is not taken into consideration, and the first Piola-Kirchhoff stress tensor coincides with the Cauchy stress tensor, i.e. $\boldsymbol{P}=\boldsymbol{\sigma}$. Then one has
\begin{equation}
W(\boldsymbol{F})=\frac{1}{2}\langle\boldsymbol{K}:(\boldsymbol{F}-\boldsymbol{I}),(\boldsymbol{F}-\boldsymbol{I}) \rangle\,,
\quad W^*(\boldsymbol{\tau})=\frac{1}{2}\langle\boldsymbol{\tau},\boldsymbol{K}^{-1}:\boldsymbol{\tau} \rangle+\int_{\Omega}\mathrm{tr}\boldsymbol{\tau}\,,\quad
(\boldsymbol{F},\boldsymbol{\tau})\in \mathcal{E}\times \mathcal{E}\,,
\end{equation}
and
\begin{equation}
\bar{\phi}_{(\boldsymbol{u},\boldsymbol{\sigma})}(\boldsymbol{e})=\frac{1}{2}\langle\boldsymbol{K}:\nabla \boldsymbol{e},\nabla \boldsymbol{e}\rangle\,,\quad
\bar{\phi}^*_{(\boldsymbol{u},\boldsymbol{\sigma})}(\boldsymbol{\theta})=\frac{1}{2}\langle\boldsymbol{\theta},\boldsymbol{K}^{-1}:\boldsymbol{\theta} \rangle \,,\quad
(\boldsymbol{e},\boldsymbol{\theta})\in X\times \mathcal{E}\,.
\end{equation}
The CRE has the conventional form as
\begin{equation}
\begin{split}
\Psi(\hat{\boldsymbol{u}},\hat{\boldsymbol{\sigma}})& =\frac{1}{2}\langle\boldsymbol{K}:\nabla\hat{\boldsymbol{u}},\nabla\hat{\boldsymbol{u}} \rangle
+\frac{1}{2}\langle\hat{\boldsymbol{\sigma}},\boldsymbol{K}^{-1}:\hat{\boldsymbol{\sigma}} \rangle-\langle\hat{\boldsymbol{\sigma}},\nabla\hat{\boldsymbol{u}}\rangle\\
& =\frac{1}{2}\langle\hat{\boldsymbol{\sigma}}-\boldsymbol{K}:\nabla\hat{\boldsymbol{u}},~\boldsymbol{K}^{-1}:\hat{\boldsymbol{\sigma}}-\nabla\hat{\boldsymbol{u}} \rangle \,,\quad
(\hat{\boldsymbol{u}},\hat{\boldsymbol{\sigma}})\in X_D\times SA\,.
\end{split}
\end{equation}
\vspace{2mm}

\begin{theorem}
The CRE for hyperelasticity can be represented as the sum of the potential energy subject to an admissible displacement field $\hat{\boldsymbol{u}}\in X_D$ and the complementary energy subject to an admissible stress field $\hat{\boldsymbol{\sigma}}\in SA$, i.e.
\begin{equation}
\Psi(\hat{\boldsymbol{u}},\hat{\boldsymbol{P}})=\Pi_p(\hat{\boldsymbol{u}})+\Pi_c(\hat{\boldsymbol{P}})\,,\quad (\hat{\boldsymbol{u}},\hat{\boldsymbol{P}})\in X_D\times SA\,.
\end{equation}
\end{theorem}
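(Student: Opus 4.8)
The plan is to expand the right-hand side $\Pi_p(\hat{\boldsymbol{u}})+\Pi_c(\hat{\boldsymbol{P}})$ directly from the definitions of the potential and complementary energy functionals, and to show it collapses to the Fenchel--Young expression defining $\Psi(\hat{\boldsymbol{u}},\hat{\boldsymbol{P}})$ in \eqref{CREhyp}. Concretely, I would start from
\[
\Pi_p(\hat{\boldsymbol{u}})+\Pi_c(\hat{\boldsymbol{P}})=W(\boldsymbol{F}(\hat{\boldsymbol{u}}))-l_p(\hat{\boldsymbol{u}})+W^*(\hat{\boldsymbol{P}})-\int_{\Omega}\mathrm{tr}\hat{\boldsymbol{P}}-l_c(\hat{\boldsymbol{P}})\,,
\]
so that the task reduces to identifying the three ``load'' terms $l_p(\hat{\boldsymbol{u}})+\int_{\Omega}\mathrm{tr}\hat{\boldsymbol{P}}+l_c(\hat{\boldsymbol{P}})$ with the duality pairing $\langle\hat{\boldsymbol{P}},\boldsymbol{F}(\hat{\boldsymbol{u}})\rangle$ that appears in $\Psi$.

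The key step is the decomposition of that pairing. Writing $\boldsymbol{F}(\hat{\boldsymbol{u}})=\boldsymbol{I}+\nabla\hat{\boldsymbol{u}}$ and using bilinearity of $\langle\cdot,\cdot\rangle$,
\[
\langle\hat{\boldsymbol{P}},\boldsymbol{F}(\hat{\boldsymbol{u}})\rangle=\langle\hat{\boldsymbol{P}},\boldsymbol{I}\rangle+\langle\hat{\boldsymbol{P}},\nabla\hat{\boldsymbol{u}}\rangle=\int_{\Omega}\mathrm{tr}\hat{\boldsymbol{P}}+\langle\hat{\boldsymbol{P}},\nabla\hat{\boldsymbol{u}}\rangle\,,
\]
since $\langle\hat{\boldsymbol{P}},\boldsymbol{I}\rangle=\int_\Omega\hat{\boldsymbol{P}}:\boldsymbol{I}=\int_\Omega\mathrm{tr}\hat{\boldsymbol{P}}$. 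Because $(\hat{\boldsymbol{u}},\hat{\boldsymbol{P}})\in X_D\times SA$, Lemma~\ref{lplc} applies and gives $\langle\hat{\boldsymbol{P}},\nabla\hat{\boldsymbol{u}}\rangle=l_p(\hat{\boldsymbol{u}})+l_c(\hat{\boldsymbol{P}})$. Substituting, $\langle\hat{\boldsymbol{P}},\boldsymbol{F}(\hat{\boldsymbol{u}})\rangle=\int_{\Omega}\mathrm{tr}\hat{\boldsymbol{P}}+l_p(\hat{\boldsymbol{u}})+l_c(\hat{\boldsymbol{P}})$, which is exactly the combination of load terms isolated above.

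Plugging this identity back into the expanded right-hand side yields
\[
\Pi_p(\hat{\boldsymbol{u}})+\Pi_c(\hat{\boldsymbol{P}})=W(\boldsymbol{F}(\hat{\boldsymbol{u}}))+W^*(\hat{\boldsymbol{P}})-\langle\hat{\boldsymbol{P}},\boldsymbol{F}(\hat{\boldsymbol{u}})\rangle=\Psi(\hat{\boldsymbol{u}},\hat{\boldsymbol{P}})\,,
\]
which completes the argument. There is no genuine obstacle here: the whole proof hinges on the admissibility hypotheses $\hat{\boldsymbol{u}}\in X_D$, $\hat{\boldsymbol{P}}\in SA$ being precisely what is needed to invoke Lemma~\ref{lplc}, and on the elementary observation that the $\int_\Omega\mathrm{tr}\hat{\boldsymbol{P}}$ term built into $\Pi_c$ accounts exactly for the contribution of the identity part of $\boldsymbol{F}$. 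The only point to state carefully is that $\Psi(\hat{\boldsymbol{u}},\hat{\boldsymbol{P}})$ is finite (equivalently $\hat{\boldsymbol{u}}\in X$, so $W(\boldsymbol{F}(\hat{\boldsymbol{u}}))<\infty$, and $W^*(\hat{\boldsymbol{P}})<\infty$), which is implicit in speaking of admissible fields; otherwise all manipulations are over $\mathbb{R}$.
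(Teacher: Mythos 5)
Your proposal is correct and follows exactly the route the paper intends: expand $\Pi_p+\Pi_c$, split $\boldsymbol{F}(\hat{\boldsymbol{u}})=\boldsymbol{I}+\nabla\hat{\boldsymbol{u}}$ so that $\langle\hat{\boldsymbol{P}},\boldsymbol{I}\rangle=\int_\Omega\mathrm{tr}\hat{\boldsymbol{P}}$, and invoke Lemma~\ref{lplc} for the remaining pairing. The paper's proof is a one-line appeal to the definition of the CRE and Lemma~\ref{lplc}; yours is simply the same argument written out in full.
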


\begin{proof}
From the definition of CRE in \eqref{CREhyp}, this theorem comes out naturally from Lemma \ref{lplc}.
\end{proof}

\vspace{2mm}
\noindent \textbf{Remark 3.2.} When $\hat{\boldsymbol{u}}={\boldsymbol{u}}$ and $\hat{\boldsymbol{P}}={\boldsymbol{P}}$, one has $\Psi({\boldsymbol{u}},{\boldsymbol{P}})=\Pi_p({\boldsymbol{u}})+\Pi_c({\boldsymbol{P}})=0$, which agrees with
\eqref{pippic}.

\section{Application to some elliptic variational inequalities}

In this section, a class of elliptic variational inequalities (EVIs) is introduced and described by two variational formulations which can be viewed as Legendre-Fenchel dual problems. A GCRE is then defined for these EVI problems, showing some significant properties. A frictional contact problem of a linear elastic body is also presented as a typical example of these problems.

\subsection{Problem definition}

Let us consider the problems described by EVIs in the following (primal) variational formulation:
\begin{equation}\label{func0}
u=\arg\min_{v\in K}\left\{\frac{1}{2}[\mathcal{A}v,\mathcal{A}v]-l(v)+j(v)\right\}
\end{equation}
where $K\subset V$ is a closed convex nonempty set in a real Hilbert space $V$, $[\cdot,\cdot]$ is the inner product of a Hilbert space $S$, $\mathcal{A}:V\to S$ is a linear differential operator, $l(\cdot)$ is a continuous linear form on $V$, i.e. $l\in V'$, and $j(\cdot)$ is a lower semi-continuous convex functional defined on $V$, but not necessarily differentiable. Let us define a bilinear form $a(\cdot,\cdot):V\times V\to \mathbb{R}$ as $a(u,v)=[ \mathcal{A}u,\mathcal{A}v]$, $u,v\in V$, and assume that $a$ is continuous and coercive. Then the variational formulation \eqref{func0} can be rewritten as
\begin{equation}\label{func}
u=\arg \min_{v\in K}\left\{\frac{1}{2}a(v,v)-l(v)+j(v)\right\},
\end{equation}
or equivalently written as: $u\in K$ such that
\begin{equation}\label{weak}
a(u,v-u)+j(v)-j(u)\geq l(v-u)\quad \forall v\in K.
\end{equation}

Suppose that the convex set $K$ can be defined in terms of a convex cone $M$, which is a subset of a real Hilbert space $L$ and has its vertex at $\theta_L$ (zero element of $L$), and a linear form $g_1 \in L'$, i.e.
\begin{equation}\label{defK}
K=\{v\in V: b_1(v,\eta)\geq g_1(\eta),\forall \eta \in M\}\,,
\end{equation}
where $b_1(\cdot,\cdot)$ is a continuous bilinear form on $V\times L$. Assume that functional $j$ is defined in terms of a bounded subset $N$ of a real Hilbert space $Q$, such that
\begin{equation}\label{defj}
j(v)=\max_{\xi \in N}\left\{-b_2(v,\xi)+g_2(\xi)\right\}\,,\quad v\in V,
\end{equation}
in which $b_2(\cdot,\cdot)$ is a continuous bilinear form on $V\times Q$ and $g_2\in Q'$. Suppose that bilinear forms $b_1$ and $b_2$ satisfy the following condition: $(\eta,\xi)\in L \times Q$,
\begin{equation}\label{bcondition}
b_1(v,\eta)+b_2(v,\xi)=0\quad  \forall v\in V\quad \Leftrightarrow \quad(\eta,\xi)=(\theta_L,\theta_Q)\,.
\end{equation}

With the definition of $K$, an inequality constraint in \eqref{defK}, and that of functional $j$ in \eqref{defj} and introduction of two Langrangian multipliers, a functional is defined as:
\begin{equation}
\mathcal{L}(v,\eta,\xi)=\frac{1}{2}a(v,v)-l(v)-b(v,\eta)+ g_1(\eta)-b(v,\xi)+ g_2(\xi)\,.
\end{equation}
There exists only one saddle point of $\mathcal{L}$ in $V\times M\times N$, denoted by $(u,\lambda,\omega)$, i.e.
\begin{equation}\label{Lfunc}
\mathcal{L}(u,\lambda,\omega)=\min_{v\in V} \max_{\eta\in M,\xi\in N} \mathcal{L}(v,\eta,\xi)\,,
\end{equation}
or equivalently, $(u,\lambda,\omega)\in V\times M\times N$ such that
\begin{subequations}  \label{mix}
\begin{align}
a(u,v)-b_1(v,\lambda)-b_2(v,\omega)=l(v) \quad \forall v\in V\,,            \label{mix-1} \\
b_1(u,\eta-\lambda)\geq g_1(\eta-\lambda) \quad \forall \eta\in M\,,         \label{mix-2} \\
b_2(u,\xi-\omega)\geq g_2(\xi-\omega) \quad \forall \xi \in N\,,              \label{mix-3}
\end{align}
\end{subequations}
where $u$ can be verified to be the solution of minimization problem \eqref{func}.

Since $M$ is a convex cone, \eqref{mix-2} can be equivalently split into $b_1(u,\eta)\geq g_1(\eta)$, $\forall \eta \in M$, and $b_1(u,\lambda)= g_1(\lambda)$. The following lemma also comes into being.

\begin{lemma}
\eqref{mix-3} is equivalent to
\begin{equation}\label{IIr0}
b_2(u,\omega)-g_2(\omega)+j(u)=0.
\end{equation}
\end{lemma}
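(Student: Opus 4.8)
The plan is to unwind the definition of the nonsmooth functional $j$ in \eqref{defj}, namely $j(u)=\max_{\xi\in N}\{-b_2(u,\xi)+g_2(\xi)\}$, and to read \eqref{mix-3} as the statement that this maximum is attained precisely at $\xi=\omega$. Everything then reduces to a two-sided comparison of the value $j(u)$ with the number $-b_2(u,\omega)+g_2(\omega)$.

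First I would prove the forward implication. Starting from \eqref{mix-3}, use the bilinearity of $b_2$ and the linearity of $g_2$ to rewrite $b_2(u,\xi-\omega)\geq g_2(\xi-\omega)$ as $-b_2(u,\xi)+g_2(\xi)\leq -b_2(u,\omega)+g_2(\omega)$ for every $\xi\in N$. Taking the supremum over $\xi\in N$ gives $j(u)\leq -b_2(u,\omega)+g_2(\omega)$. Conversely, since $\omega\in N$, the defining formula for $j$ directly yields $j(u)\geq -b_2(u,\omega)+g_2(\omega)$. The two inequalities force $j(u)= -b_2(u,\omega)+g_2(\omega)$, which is exactly \eqref{IIr0}.

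For the converse, suppose \eqref{IIr0} holds, i.e. $j(u)=-b_2(u,\omega)+g_2(\omega)$. For an arbitrary $\xi\in N$, the definition of $j$ gives $-b_2(u,\xi)+g_2(\xi)\leq j(u)$; substituting the value of $j(u)$ produces $-b_2(u,\xi)+g_2(\xi)\leq -b_2(u,\omega)+g_2(\omega)$, which rearranges to $b_2(u,\xi-\omega)\geq g_2(\xi-\omega)$. Since $\xi\in N$ was arbitrary, \eqref{mix-3} follows.

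There is no serious obstacle here; the only point requiring care is that in the forward direction one uses the membership $\omega\in N$ (guaranteed by $(u,\lambda,\omega)\in V\times M\times N$) to obtain the lower bound on $j(u)$ — without it the supremum argument would be one-sided. The boundedness of $N$ assumed in \eqref{defj} ensures $j$ is finite-valued, so all quantities manipulated above are genuine real numbers and the rearrangements are legitimate.
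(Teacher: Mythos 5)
Your proof is correct and follows essentially the same route as the paper: both directions reduce to observing that \eqref{mix-3} says the maximum defining $j(u)$ in \eqref{defj} is attained at $\xi=\omega$, using $\omega\in N$ for the lower bound on $j(u)$. Your write-up merely makes explicit the two-sided comparison that the paper compresses into the statement that $b_2(u,\omega)-g_2(\omega)=\min_{\xi\in N}\{b_2(u,\xi)-g_2(\xi)\}=-j(u)$.
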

\begin{proof}
For $\xi \in N$, $b_2(u,\xi)-g_2(\xi)\geq b_2(u,\omega)-g_2(\omega)$ implies $b(u,\omega)-g_2(\omega)=\min_{\xi\in N}b_2(u,\xi)-g_2(\xi)=-j(u)$. Conversely, assume \eqref{IIr0}, then $-b_2(u,\omega)+g_2(\omega)=j(u)=\max_{\mu \in N}-b_2(u,\mu)+g_2(\mu)\geq -b_2(u,\xi)+g_2(\xi)$, $\forall \xi \in N$, hence \eqref{mix-3} holds.
\end{proof}

In summary, problem \eqref{mix} can be rewritten as a group of three types of equations:
\begin{itemize}
\item compatibility condition:
\begin{equation}\label{evic}
u \in K\,;
\end{equation}
\item equilibrium condition:
\begin{equation}\label{equi}
(p,\lambda,\omega) \in T\,,
\end{equation}
\begin{equation*}
T:=\{(q,\eta,\xi)\in S\times M\times N: [ p,\mathcal{A}v ] =b_1(v,\eta)
+b_2(v,\xi)+l(v),\forall v\in V  \}\,;
\end{equation*}
\item constitutive relations:
\begin{equation}\label{consti}
p=\mathcal{A}u\,,\quad b_1(u,\lambda)-g_1(\lambda)=0\,,\quad b_2(u,\omega)-g_2(\omega)+j(u)=0\,,
\end{equation}
or alternatively expressed as
\begin{equation}
\frac{1}{2}[p-\mathcal{A}u,p-\mathcal{A}u]+b_1(u,\lambda)-g_1(\lambda)+b_2(u,\omega)-g_2(\omega)+j(u)=0\,,
\end{equation}
for $(u,\lambda,\omega)\in K\times M\times N$.
\end{itemize}

\vspace{2mm}
\noindent \textbf{Remark 4.1.} From \eqref{equi}, it is evident that the pair $(\hat{\lambda},\hat{\omega})\in M\times N$ is uniquely determined by $\hat{p}$ due to \eqref{bcondition}, if $(\hat{p},\hat{\lambda},\hat{\omega})\in T$. Meanwhile, a map $\Upsilon:S\to  M\times N, \hat{p}\mapsto(\hat{\lambda}(\hat{p}),\hat{\omega}(\hat{p}))$ has been defined. If another set $T_0$ is defined to include the first components of all the elements in $T$, i.e.
\begin{equation}
\begin{split}
T_0= &~ \{q\in S:\exists(\eta,\xi)\in M\times N~\mathrm{s.t.}~[p,\mathcal{A}v]=
b_1(v,\eta)+b_2(v,\xi)+l(v),\forall v\in V\}\;\\
= & ~\{q\in S:(q,\eta,\xi)\in T\}\,,
\end{split}
\end{equation}
one can see that $((\hat{p},\hat{\lambda},\hat{\omega})\in T)$ is equivalent to that $\hat{p}\in T_0$, and $(\hat{\lambda},\hat{\omega})=(\hat{\lambda}(\hat{p}),\hat{\omega}(\hat{p}))$.

\subsection{Dual variational formulation}
\begin{theorem}
The dual variational problem of \eqref{func0} is expressed as
\begin{equation}\label{EVIdual}
(\bar{p},\bar{\lambda},\bar{\omega})=\arg\max_{(q,\eta,\xi)\in S\times M\times N}\left\{-\frac{1}{2}[q,q]+g_1(\eta)+g_2(\xi)-I_T(q,\eta,\xi)\right\}\,.
\end{equation}
\end{theorem}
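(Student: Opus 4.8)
The plan is to reproduce, in this new setting, the Legendre–Fenchel construction used for the hyperelastic problem (cf. \eqref{min}--\eqref{max} and Theorem~2.1): write the primal objective as an infimum over $v\in V$ of a pointwise supremum, then swap the order to read off the dual functional $G$ and finally argue that its supremum is attained. Concretely, extend the primal objective to all of $V$ as $J(v)=\tfrac12 a(v,v)-l(v)+j(v)+I_K(v)$ and represent each of its three nonlinear pieces as a supremum. Since $\tfrac12[\cdot,\cdot]$ is self-conjugate on the Hilbert space $S$, one has $\tfrac12[\mathcal{A}v,\mathcal{A}v]=\sup_{q\in S}\bigl\{[q,\mathcal{A}v]-\tfrac12[q,q]\bigr\}$, with the supremum attained at $q=\mathcal{A}v$; the functional $j$ is already in such a form by \eqref{defj}; and because $M$ is a convex cone with vertex $\theta_L$, the constraint indicator satisfies $I_K(v)=\sup_{\eta\in M}\bigl\{-b_1(v,\eta)+g_1(\eta)\bigr\}$ --- the supremum equals $0$ when $v\in K$ (take $\eta=\theta_L$) and $+\infty$ otherwise (if $b_1(v,\eta_0)<g_1(\eta_0)$ for some $\eta_0\in M$, scaling $\eta_0$ by $t\to+\infty$ forces $+\infty$). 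As these suprema range over independent variables, their sum is a single supremum, giving
\[
J(v)=\sup_{(q,\eta,\xi)\in S\times M\times N}\mathcal{M}(v,q,\eta,\xi),\qquad
\mathcal{M}(v,q,\eta,\xi):=[q,\mathcal{A}v]-b_1(v,\eta)-b_2(v,\xi)-l(v)-\tfrac12[q,q]+g_1(\eta)+g_2(\xi).
\]
Hence $\inf_{v\in V}J(v)=\inf_{v\in V}\sup_{(q,\eta,\xi)}\mathcal{M}$, and the dual problem is $\sup_{(q,\eta,\xi)}G(q,\eta,\xi)$ with $G(q,\eta,\xi):=\inf_{v\in V}\mathcal{M}(v,q,\eta,\xi)$, exactly the treatment in \eqref{min}--\eqref{max}.

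Next I would evaluate $G$. For fixed $(q,\eta,\xi)$ the map $v\mapsto[q,\mathcal{A}v]-b_1(v,\eta)-b_2(v,\xi)-l(v)$ is a bounded linear functional on the whole space $V$, so its infimum over $V$ is $0$ if it vanishes identically and $-\infty$ otherwise. By the definition of $T$ in \eqref{equi}, it vanishes identically precisely when $(q,\eta,\xi)\in T$; therefore this infimum equals $-I_T(q,\eta,\xi)$, and
\[
G(q,\eta,\xi)=-\tfrac12[q,q]+g_1(\eta)+g_2(\xi)-I_T(q,\eta,\xi),
\]
which is the dual functional in \eqref{EVIdual}.

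It then remains to justify the $\arg\max$, i.e. attainment of $\sup_{(q,\eta,\xi)\in S\times M\times N}G$. Note $T\neq\emptyset$ since $(p,\lambda,\omega)\in T$ by \eqref{equi}. On $T$, Remark~4.1 shows $(\eta,\xi)$ is determined by $q$ via a map $\Upsilon$, which is affine: uniqueness in \eqref{bcondition} makes $q\mapsto(\eta,\xi)$ additive up to the fixed shift contributed by $l$, and $q$ ranges over the closed convex set $T_0$ (closedness and convexity of $T_0$ following from those of $M$, $N$ and the affine equality constraints). Restricted to $T$, $G$ reduces to $q\mapsto-\tfrac12[q,q]+(\text{affine in }q)$ on $T_0$, a strictly concave, upper semi-continuous, coercive functional; hence a maximizer $\bar p\in T_0$ exists, and $(\bar\lambda,\bar\omega):=\Upsilon(\bar p)$ gives the maximizer $(\bar p,\bar\lambda,\bar\omega)$ of \eqref{EVIdual}.

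The main obstacle is not a single computation but the structural bookkeeping: making the supremum-representation of $I_K$ rigorous (this is exactly where the cone property of $M$ is used), passing cleanly from "infimum of a bounded linear functional over the vector space $V$" to the penalty $I_T$, and verifying that $T$ (equivalently $T_0$) is closed and convex so that $I_T$ is an admissible convex, lower semi-continuous penalty and the coercivity argument for attainment is valid. Everything else follows the pattern already established for the hyperelastic dual problem in Section~2.
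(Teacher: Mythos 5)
Your proposal is correct and follows essentially the same route as the paper: your $\mathcal{M}$ is exactly the paper's Lagrangian $\mathcal{L}_0$, obtained by conjugating $\tfrac12 a(v,v)$ over $q\in S$, and the inner infimum over $v\in V$ of the residual linear functional yields $-I_T$ just as in the paper's computation. The only difference is cosmetic: you re-derive the supremum representations of $I_K$ and $j$ over the cone $M$ and the set $N$ (which the paper imports from the saddle-point formulation \eqref{Lfunc}) and you spell out the attainment argument on $T_0$ that the paper merely asserts.
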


\begin{proof}
From the Legendre-Fenchel transform
\begin{equation}
\frac{1}{2}a(v,v)=\max_{q\in S}~\left\{[q,\mathcal{A}v]-\frac{1}{2}[q,q]\right\}\,,\quad v\in V\,,
\end{equation}
the minimizing problem \eqref{func0} can be further expressed as
\begin{equation}
\begin{split}
\min_{v\in V}\max_{(q,\eta,\xi)\in S\times M\times N} \mathcal{L}_0(v,q,\eta,\xi)\,,\qquad\qquad\qquad\qquad\qquad&\\
\mathcal{L}_0(v,q,\eta,\xi)=\mathcal{L}(v,\eta,\xi)+[q,\mathcal{A}v]-\frac{1}{2}[q,q]-\frac{1}{2}a(v,v)\,,\quad
(v,q,\eta,\xi)\in V\times S\times M\times N\,.&
\end{split}
\end{equation}
Then one has
\begin{equation}
\min_{v\in V}~\mathcal{L}_0(v,q,\eta,\xi)=  -\frac{1}{2}[q,q]+g_1(\eta)+g_2(\xi)
+ \min_{v\in V}~\left\{[p,\mathcal{A}v]-l(v)-b_1(v,\eta)-b_2(v,\xi)\right\}\,.
\end{equation}
The definition of $T$ gives
\begin{equation}
\min_{v\in V}~\left\{[p,\mathcal{A}v]-l(v)-b_1(v,\eta)-b_2(v,\xi)\right\}=-I_T(q,\eta,\xi)\,.
\end{equation}
Then, the maximizing problem in \eqref{EVIdual} is obtained. The existence and uniqueness of maximizer $(\bar{p},\bar{\lambda},\bar{\omega})$ of the quadratic functional is also ensured.
\end{proof}

\begin{theorem}
The maximizer $(\bar{p},\bar{\lambda},\bar{\omega})$ for the dual maximizing variational problem \eqref{EVIdual} is the exact solution $({p},{\lambda},{\omega})$ of the EVI problem stated by \eqref{evic}, \eqref{equi} and \eqref{consti}.
\end{theorem}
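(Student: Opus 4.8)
The plan is to show directly that the exact solution $(p,\lambda,\omega)$ of \eqref{evic}--\eqref{consti} (with $p=\mathcal{A}u$) is a maximizer of the dual functional, and then to invoke the uniqueness of $(\bar p,\bar\lambda,\bar\omega)$ established in the preceding theorem. Write $F(q,\eta,\xi)=-\tfrac12[q,q]+g_1(\eta)+g_2(\xi)$; since the term $-I_T$ merely restricts the admissible set to $T$, problem \eqref{EVIdual} amounts to $\max_{(q,\eta,\xi)\in T}F(q,\eta,\xi)$, where $T$ is nonempty, closed and convex (its convexity inherited from those of $M$ and $N$). First I would record feasibility: by the equilibrium condition \eqref{equi}, $(p,\lambda,\omega)\in T$.

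Second, because $F$ is concave and $T$ convex, $(p,\lambda,\omega)$ is a maximizer of $F$ over $T$ if and only if the first-order variational inequality
\[
-[p,q-p]+g_1(\eta-\lambda)+g_2(\xi-\omega)\le 0\qquad\forall\,(q,\eta,\xi)\in T
\]
holds. To verify it, pick $(q,\eta,\xi)\in T$ and subtract the defining identity of $T$ for $(p,\lambda,\omega)$ from the one for $(q,\eta,\xi)$, obtaining $[q-p,\mathcal{A}v]=b_1(v,\eta-\lambda)+b_2(v,\xi-\omega)$ for every $v\in V$. Taking $v=u$ and using the constitutive relation $p=\mathcal{A}u$ gives $[p,q-p]=b_1(u,\eta-\lambda)+b_2(u,\xi-\omega)$, so the left-hand side above equals
\[
-\bigl(b_1(u,\eta-\lambda)-g_1(\eta-\lambda)\bigr)-\bigl(b_2(u,\xi-\omega)-g_2(\xi-\omega)\bigr).
\]
Now $b_1(u,\lambda)-g_1(\lambda)=0$ by \eqref{consti} and $b_1(u,\eta)-g_1(\eta)\ge 0$ for $\eta\in M$ because $u\in K$, whence $b_1(u,\eta-\lambda)-g_1(\eta-\lambda)=b_1(u,\eta)-g_1(\eta)\ge 0$; similarly $b_2(u,\omega)-g_2(\omega)=-j(u)$ by \eqref{consti}, while the definition \eqref{defj} of $j$ gives $b_2(u,\xi)-g_2(\xi)\ge -j(u)$ for $\xi\in N$, so $b_2(u,\xi-\omega)-g_2(\xi-\omega)=b_2(u,\xi)-g_2(\xi)+j(u)\ge 0$. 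Both parenthesized terms being nonnegative, the variational inequality holds, hence $(p,\lambda,\omega)$ maximizes the dual problem; uniqueness of the dual maximizer then forces $(\bar p,\bar\lambda,\bar\omega)=(p,\lambda,\omega)$.

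An equivalent route proceeds through the vanishing of the duality gap: testing \eqref{mix-1} with $v=u$ and inserting \eqref{consti} shows that the dual objective evaluated at $(p,\lambda,\omega)$ equals $\tfrac12 a(u,u)-l(u)+j(u)$, the minimal value of the primal problem \eqref{func}; combined with the weak-duality inequality $\min\max\ge\max\min$ applied to $\mathcal{L}_0$, this again identifies $(p,\lambda,\omega)$ with the unique dual maximizer. I expect the only delicate point to be the bookkeeping in the computation above — in particular, justifying the test $v=u$ in the defining identity of $T$ and handling correctly the cone structure of $M$ and the convexity of $N$ when splitting the two bilinear forms — rather than any conceptual obstacle.
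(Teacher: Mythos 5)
Your proposal is correct and follows essentially the same route as the paper: verify feasibility $(p,\lambda,\omega)\in T$, reduce optimality over the convex set $T$ to the first-order condition $[q-p,\mathcal{A}u]-g_1(\eta-\lambda)-g_2(\xi-\omega)\geq 0$, and establish it by subtracting the defining identities of $T$ tested with $v=u$ and invoking the sign conditions coming from \eqref{mix-2} and \eqref{mix-3} (which you equivalently re-derive from \eqref{consti} together with the definitions of $K$ and $j$). The only cosmetic difference is that the paper cites \eqref{mix-2}--\eqref{mix-3} directly where you unpack them, and your closing remark on the vanishing duality gap is an additional observation the paper defers to the subsequent saddle-point theorem.
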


\begin{proof}
Due to the existence and uniqueness of the minimizer of the dual maximizing problem \eqref{EVIdual}, all needed to prove is that $({p},{\lambda},{\omega})\in T$ is exactly the maximizer, which can be completed once the following inequality, an extreme condition
\begin{equation}
[q-p,\mathcal{A}u]-g_1(\eta-\lambda)-g_2(\xi-\omega)\geq 0\quad \forall(q,\eta,\xi)\in T
\end{equation}
is established.

Since $(q,\eta,\xi)\in T$, $(p,\lambda,\omega)\in T$ and $u\in V$,
\begin{equation*}
\begin{split}
[q-p,\mathcal{A}u] & =[b_1(u,\eta)+b_2(u,\xi)+l(v)]-[b_1(u,\lambda)+b_2(u,\omega)+l(v)]\\
& = b_1(u,\eta-\lambda)+b_2(u,\xi-\omega)\,,
\end{split}
\end{equation*}
one has
\begin{equation*}
[q-p,\mathcal{A}u]-g_1(\eta-\lambda)-g_2(\xi-\omega)
= [b_1(u,\eta-\lambda)-g_1(\eta-\lambda)]+[b_2(u,\xi-\omega)-g_2(\xi-\omega)]\geq 0\,,
\end{equation*}
incorporating \eqref{mix-2} and \eqref{mix-3}.
\end{proof}

With the introduction of a potential energy functional $\mathfrak{F}_p:V\to \mathbb{R}$ and a complementary energy functional $\mathfrak{F}_c:S\times L\times Q\to \mathbb{R}$,
\begin{equation}
\begin{split}
\mathfrak{F}_p(v)& =\frac{1}{2}a(v,v)-l(v)+j(v)\,,\quad v\in V\,,\\
\mathfrak{F}_c(q,\eta,\xi)& =\frac{1}{2}[q,q]-g_1(\eta)-g_2(\xi)\,,\quad (q,\eta,\xi)\in S\times L\times Q\,,
\end{split}
\end{equation}
the primal and dual variational formulations of the problem can be written as
\begin{equation}
\begin{split}
\mathfrak{F}_p(u)& =\min_{v\in K}~\mathfrak{F}_p(v)\,,\\
-\mathfrak{F}_c(p,\lambda,\omega)& =\max_{(q,\eta,\xi)\in T}~-\mathfrak{F}_c(q,\eta,\xi)\,,
\end{split}
\end{equation}
respectively.

\begin{theorem}
$(u,(p,\lambda,\omega))$ is a saddle point of ${\mathcal{L}}_0$ in $V\times (S\times M\times N)$, i.e.
\begin{equation}
\min_{v\in V}\max_{(q,\eta,\xi)\in S\times M\times N} \mathcal{L}_0(v,q,\eta,\xi)=\mathcal{L}_0(u,p,\lambda,\omega)=\max_{(q,\eta,\xi)\in S\times M\times N}\min_{v\in V} \mathcal{L}_0(v,q,\eta,\xi)\,.
\end{equation}
\end{theorem}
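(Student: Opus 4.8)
The plan is to reuse the three-step pattern from the saddle-point theorem for $\tilde{\mathcal{L}}$ in Section~2: identify the outer min--max with the primal energy, the outer max--min with the negative dual energy, and then show that both equal $\mathcal{L}_0$ evaluated at the exact solution $(u,p,\lambda,\omega)$.

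First I would record the two ``endpoint'' identities, which are essentially already available. On the one hand, the rewriting of \eqref{func0} that opened this subsection gives
\begin{equation*}
\min_{v\in V}\max_{(q,\eta,\xi)\in S\times M\times N}\mathcal{L}_0(v,q,\eta,\xi)=\min_{v\in K}\mathfrak{F}_p(v)=\mathfrak{F}_p(u)\,;
\end{equation*}
here the inner maximum over $q\in S$ reconstructs $\frac{1}{2}a(v,v)$ through the Legendre--Fenchel transform, the maximum over $\eta\in M$ equals $I_K(v)$ because $M$ is a convex cone with vertex $\theta_L$ (so the supremum of $-b_1(v,\eta)+g_1(\eta)$ is $0$ on $K$ and $+\infty$ off $K$), and the maximum over $\xi\in N$ reproduces $j(v)$ by \eqref{defj}. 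On the other hand, the preceding two theorems on the dual variational formulation give
\begin{equation*}
\max_{(q,\eta,\xi)\in S\times M\times N}\min_{v\in V}\mathcal{L}_0(v,q,\eta,\xi)=\max_{(q,\eta,\xi)\in T}\bigl(-\mathfrak{F}_c(q,\eta,\xi)\bigr)=-\mathfrak{F}_c(p,\lambda,\omega)\,.
\end{equation*}

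The only substantive step is then to check $\mathfrak{F}_p(u)=\mathcal{L}_0(u,p,\lambda,\omega)=-\mathfrak{F}_c(p,\lambda,\omega)$. I would expand
\begin{equation*}
\mathcal{L}_0(u,p,\lambda,\omega)=\mathcal{L}(u,\lambda,\omega)+[p,\mathcal{A}u]-\frac{1}{2}[p,p]-\frac{1}{2}a(u,u)
\end{equation*}
with $\mathcal{L}(u,\lambda,\omega)=\frac{1}{2}a(u,u)-l(u)-b_1(u,\lambda)+g_1(\lambda)-b_2(u,\omega)+g_2(\omega)$, and insert the constitutive relations \eqref{consti}: $p=\mathcal{A}u$ makes $[p,\mathcal{A}u]=[p,p]=a(u,u)$, the identity $b_1(u,\lambda)-g_1(\lambda)=0$ deletes the first multiplier contribution, and $b_2(u,\omega)-g_2(\omega)+j(u)=0$ replaces $-b_2(u,\omega)+g_2(\omega)$ by $j(u)$; collecting terms gives $\frac{1}{2}a(u,u)-l(u)+j(u)=\mathfrak{F}_p(u)$. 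For the second equality I would instead use the equilibrium condition $(p,\lambda,\omega)\in T$ tested against $v=u$, i.e. $[p,\mathcal{A}u]=b_1(u,\lambda)+b_2(u,\omega)+l(u)$, which cancels every $u$-dependent term in the same expansion and leaves $g_1(\lambda)+g_2(\omega)-\frac{1}{2}[p,p]=-\mathfrak{F}_c(p,\lambda,\omega)$.

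No genuine difficulty is expected: once the outer extrema are identified, everything reduces to direct substitution. The only points to keep track of are that in $\mathcal{L}$ the symbol $b(v,\eta)$ must be read as $b_1(v,\eta)$ and $b(v,\xi)$ as $b_2(v,\xi)$, and that $\mathcal{L}_0$ is evaluated at the \emph{exact} triple $(u,\lambda,\omega)$ rather than at an arbitrary admissible one, so that all three relations in \eqref{consti} together with the membership $(p,\lambda,\omega)\in T$ are simultaneously at hand.
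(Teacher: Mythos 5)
Your proposal follows exactly the route of the paper's own proof: identify the outer min--max with $\mathfrak{F}_p(u)$ and the outer max--min with $-\mathfrak{F}_c(p,\lambda,\omega)$ via the preceding results, then verify $\mathfrak{F}_p(u)=\mathcal{L}_0(u,p,\lambda,\omega)=-\mathfrak{F}_c(p,\lambda,\omega)$ from the constitutive relations \eqref{consti} and the membership $(p,\lambda,\omega)\in T$. The paper leaves that middle verification as a one-line remark, whereas you carry out the substitution explicitly (and correctly), so your write-up is simply a more detailed version of the same argument.
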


\begin{proof}
One has
\begin{equation*}
\begin{split}
& \min_{v\in V}\max_{(q,\eta,\xi)\in S\times M\times N} \mathcal{L}_0(v,q,\eta,\xi)=\min_{v\in K}~\mathfrak{F}_p(v)=\mathfrak{F}_p(u)\,,\\
& \max_{(q,\eta,\xi)\in S\times M\times N}\min_{v\in V} \mathcal{L}_0(v,q,\eta,\xi)=\max_{(q,\eta,\xi)\in T}~-\mathfrak{F}_c(q,\eta,\xi)=-\mathfrak{F}_c(p,\lambda,\omega)\,,
\end{split}
\end{equation*}
and it can be shown that
\begin{equation}\label{pippic2}
\mathfrak{F}_p(u)=\mathcal{L}_0(u,p,\lambda,\omega)=-\mathfrak{F}_c(p,\lambda,\omega)
\end{equation}
using the constitutive relation \eqref{consti} and the fact that $(u,(p,\lambda,\omega))\in V\times T$.
\end{proof}

\subsection{Introduction to a GCRE}
Considering the formulation of generalized constitutive relation error \eqref{gcre}, it can be obviously seen that $\mathcal{U}=K$ and $\mathcal{T}=T$ in this case, and
\begin{equation}
\begin{split}
& \Psi(\hat{u},(\hat{p},\hat{\lambda},\hat{\omega})) \\
= & \frac{1}{2}[\hat{p}-\mathcal{A}\hat{u},\hat{p}-\mathcal{A}\hat{u}]+b_1(\hat{u},\hat{\lambda})-g_1(\hat{\lambda})+b_2(\hat{u},\hat{\omega})-g_2(\hat{\omega})+j(\hat{u})\\
= & \left\{\frac{1}{2}a(\hat{u},\hat{u})+j(\hat{u})\right\}+\left\{\frac{1}{2}[\hat{p},\hat{p}]-g_1(\hat{\lambda})-g_2(\hat{\omega})\right\}\\
& -\left\{[\hat{p},\mathcal{A}\hat{u}]-b_1(\hat{u},\hat{\lambda})-b_2(\hat{u},\hat{\omega}) \right\}\geq 0\,,\quad (\hat{u},(\hat{p},\hat{\lambda},\hat{\omega}))\in K\times T \,.
\end{split}
\end{equation}
Then, one can express the generalized potential and complementary energy functionals, $\phi$ and $\phi^*$, and the bilinear form $\mathcal{B}$ as follows:
\begin{equation}\label{deffunc}
\begin{split}
\phi(v)& =\frac{1}{2}a({v},{v})+j({v})=\mathfrak{F}_p(v)+l(v)\,,\quad v\in V\,, \\
\phi^*(q,\eta,\xi)& =\frac{1}{2}[q,q]-g_1(\eta)-g_2(\xi)=\mathfrak{F}_c(q,\eta,\xi)\,,\quad (q,\eta,\xi)\in S\times L\times Q\,, \\
\mathcal{B}(v,(q,\eta,\xi))& =[q,\mathcal{A}v]-b_1(v,\eta)-b_2(v,\xi) \,,\quad
(v,(q,\eta,\xi))\in V\times(S\times L\times Q)\,.
\end{split}
\end{equation}

From the definitions of error functionals in \eqref{errdef}, it can be verified with $e=\hat{u}-u$ and $r=(\hat{p}-p,\hat{\lambda}-\lambda,\hat{\omega}-\omega)$ that
\begin{equation}\label{EVIf1}
\begin{split}
\bar{\phi}_{(u,(p,\lambda,\omega))}(\hat{u}-u)= & \left\{\frac{1}{2}a(\hat{u},\hat{u})+j(\hat{u})\right\}-\left\{\frac{1}{2}a({u},{u})+j({u})\right\} \\
& -\left\{[{p},\mathcal{A}\hat{u}-\mathcal{A}{u}]-b_1(\hat{u}-u,{\lambda})-b_2(\hat{u}-u,{\omega}) \right\}\\
= & ~\frac{1}{2}a(\hat{u}-u,\hat{u}-u)+\left\{b_1(\hat{u},\lambda)-g_1(\lambda)\right\}\\
& +\left\{b_2(\hat{u},\omega)-g_2(\omega)+j(\hat{u})\right\} \geq 0 \quad \forall\hat{u}\in K\,
\end{split}
\end{equation}
for $(\lambda,\omega)\in M\times N$, as well as
\begin{equation}\label{EVIf2}
\begin{split}
\bar{\phi}^*_{(u,(p,\lambda,\omega))}(\hat{p}-p,\hat{\lambda}-\lambda,\hat{\omega}-\omega)
= & ~\left\{\frac{1}{2}[\hat{p},\hat{p}]-g_1(\hat{\lambda})-g_2(\hat{\omega})\right\}-\left\{\frac{1}{2}[{p},{p}]-g_1({\lambda})-g_2({\omega})\right\} \\
& -\left\{[\hat{p}-p,\mathcal{A}{u}]-b_1(u,\hat{\lambda}-\lambda)-b_2(u,\hat{\omega}-\omega) \right\}\\
= & ~\frac{1}{2}[\hat{p}-p,\hat{p}-p]+\left\{b_1({u},\hat{\lambda})-g_1(\hat{\lambda})\right\}\\
& +\left\{b_2({u},\hat{\omega})-g_2(\hat{\omega})+j({u})\right\} \geq 0 \quad \forall (\hat{p},\hat{\lambda},\hat{\omega})\in T\,,
\end{split}
\end{equation}
for $u \in K$. Note that the constitutive relations \eqref{consti} are used repeatedly.  

\subsection{Some properties of the GCRE}

\begin{theorem}
The GCRE can be represented as the sum of two error functionals of the admissible solutions $(\hat{u},(\hat{p},\hat{\lambda},\hat{\omega}))$, i.e.
\begin{equation}\label{EVIplit}
\Psi(\hat{u},(\hat{p},\hat{\lambda},\hat{\omega}))=  \bar{\phi}_{(u,(p,\lambda,\omega))}(\hat{u}-u)+\bar{\phi}^*_{(u,(p,\lambda,\omega))}(\hat{p}-p,\hat{\lambda}-\lambda,\hat{\omega}-\omega)\,
\end{equation}
which provides a strict upper bound for the global energy error of either of the admissible solutions $(\hat{u},\hat{p})$:
\begin{equation}\label{uppbs}
\Psi(\hat{u},(\hat{p},\hat{\lambda},\hat{\omega}))\geq
\begin{cases}
\bar{\phi}_{(u,(p,\lambda,\omega))}(\hat{u}-u) & \geq \frac{1}{2}a(u-\hat{u},u-\hat{u})\,,\\
\bar{\phi}^*_{(u,(p,\lambda,\omega))}(\hat{p}-p,\hat{\lambda}-\lambda,\hat{\omega}-\omega)
& \geq \frac{1}{2}[p-\hat{p},p-\hat{p}]\,,
\end{cases}
\end{equation}
\end{theorem}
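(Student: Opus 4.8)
The plan is to obtain the decomposition \eqref{EVIplit} as an immediate specialization of the general identity \eqref{relation}, and then to extract the two energy bounds in \eqref{uppbs} by showing that the cross term $\mathcal{B}(\hat{u}-u,(\hat{p}-p,\hat{\lambda}-\lambda,\hat{\omega}-\omega))$ vanishes. First I would invoke the general theorem giving \eqref{relation} with $\mathcal{U}=K$, $\mathcal{T}=T$, and with $\phi$, $\phi^*$, $\mathcal{B}$ as specified in \eqref{deffunc}; this yields
\begin{equation*}
\Psi(\hat{u},(\hat{p},\hat{\lambda},\hat{\omega}))=\bar{\phi}_{(u,(p,\lambda,\omega))}(\hat{u}-u)+\bar{\phi}^*_{(u,(p,\lambda,\omega))}(\hat{p}-p,\hat{\lambda}-\lambda,\hat{\omega}-\omega)-\mathcal{B}(\hat{u}-u,(\hat{p}-p,\hat{\lambda}-\lambda,\hat{\omega}-\omega))\,.
\end{equation*}

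The crucial step is therefore to verify that the bilinear cross term is zero, i.e.
\begin{equation*}
\mathcal{B}(\hat{u}-u,(\hat{p}-p,\hat{\lambda}-\lambda,\hat{\omega}-\omega))=[\hat{p}-p,\mathcal{A}(\hat{u}-u)]-b_1(\hat{u}-u,\hat{\lambda}-\lambda)-b_2(\hat{u}-u,\hat{\omega}-\omega)=0\,.
\end{equation*}
Here I would use the equilibrium membership $(\hat{p},\hat{\lambda},\hat{\omega})\in T$ and $(p,\lambda,\omega)\in T$: subtracting the two defining identities of $T$ tested against the admissible difference $v=\hat{u}-u\in V$ (note $K-K\subset V$), the linear form $l$ cancels and one is left precisely with $[\hat{p}-p,\mathcal{A}(\hat{u}-u)]=b_1(\hat{u}-u,\hat{\lambda}-\lambda)+b_2(\hat{u}-u,\hat{\omega}-\omega)$, which is exactly the vanishing of $\mathcal{B}(\hat{u}-u,\cdot)$. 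This establishes \eqref{EVIplit}.

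For the lower bounds in \eqref{uppbs}, I would first note that both error functionals are nonnegative by \eqref{assum} (equivalently, \eqref{EVIf1} and \eqref{EVIf2} already display this), so each is individually bounded above by $\Psi$. Finally, from the closed-form expressions computed in \eqref{EVIf1} and \eqref{EVIf2}, the remaining terms beyond the quadratic part are $b_1(\hat{u},\lambda)-g_1(\lambda)+b_2(\hat{u},\omega)-g_2(\omega)+j(\hat{u})$ in the first case and $b_1(u,\hat{\lambda})-g_1(\hat{\lambda})+b_2(u,\hat{\omega})-g_2(\hat{\omega})+j(u)$ in the second; I would check each is nonnegative — using $\hat{u}\in K$ with $\lambda\in M$ and $g_1$ for the $b_1$ term (together with $b_1(u,\lambda)=g_1(\lambda)$ from \eqref{consti} and the cone structure), and $j(\hat{u})\geq -b_2(\hat{u},\omega)+g_2(\omega)$ directly from the definition \eqref{defj} of $j$ since $\omega\in N$; the constitutive relation $b_2(u,\omega)-g_2(\omega)+j(u)=0$ handles the symmetric terms in \eqref{EVIf2}, and $\hat{\lambda}\in M$, $\hat{\omega}\in N$ with $u\in K$ give the rest. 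Discarding these nonnegative remainders leaves $\tfrac{1}{2}a(\hat{u}-u,\hat{u}-u)$ and $\tfrac{1}{2}[\hat{p}-p,\hat{p}-p]$ respectively, yielding \eqref{uppbs}.

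I expect the main obstacle to be purely bookkeeping: correctly tracking the several constitutive identities in \eqref{consti} and the cone/boundedness properties of $M$ and $N$ so that each leftover term is shown nonnegative with the right sign. The vanishing of the cross term is the conceptual heart, but it is a one-line consequence of both triples lying in $T$; the energy bounds then follow by discarding manifestly nonnegative pieces.
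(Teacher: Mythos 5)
Your proposal is correct and follows essentially the same route as the paper: the decomposition comes from specializing the general identity \eqref{relation}, the cross term $\mathcal{B}(\hat{u}-u,(\hat{p}-p,\hat{\lambda}-\lambda,\hat{\omega}-\omega))$ is killed by testing the defining identity of $T$ for both triples against $v=\hat{u}-u\in V$, and the energy lower bounds are read off from the explicit forms \eqref{EVIf1} and \eqref{EVIf2}. You merely spell out the nonnegativity of the leftover terms (via $\hat{u},u\in K$, $\lambda,\hat{\lambda}\in M$, $\omega,\hat{\omega}\in N$, the definition of $j$, and \eqref{consti}) more explicitly than the paper, which leaves that bookkeeping implicit in the derivations of \eqref{EVIf1} and \eqref{EVIf2}.
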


\begin{proof}
Eq.\ \eqref{relation} gives the following identity for this EVI problem:
\begin{equation}
\begin{split}
\Psi(\hat{u},(\hat{p},\hat{\lambda},\hat{\omega}))= & \bar{\phi}_{(u,(p,\lambda,\omega))}(\hat{u}-u)+\bar{\phi}^*_{(u,(p,\lambda,\omega))}(\hat{p}-p,\hat{\lambda}-\lambda,\hat{\omega}-\omega)\\
& -\mathcal{B}(\hat{u}-u,(\hat{p}-p,\hat{\lambda}-\lambda,\hat{\omega}-\omega)),
\end{split}
\end{equation}
relating the errors of admissible solutions to the GCRE.

Since $\hat{u}-u\in V$, $(\hat{\lambda},\lambda)=(\hat{\lambda}(\hat{p}),\hat{\lambda}({p}))$ and
$(\hat{\omega},\omega)=(\hat{\omega}(\hat{p}),\hat{\omega}({p}))$ from Remark 4.1, one has
\begin{equation*}
\begin{split}
& \mathcal{B}(\hat{u}-u,(\hat{p},\hat{\lambda},\hat{\omega}))=[ \hat{p},\mathcal{A}(\hat{u}-u) ]-b_1(\hat{u}-u,\hat{\lambda}(\hat{p}))-b_2(\hat{u}-u,\hat{\omega}(\hat{p}))=l(\hat{u}-u)\,,\\
& \mathcal{B}(\hat{u}-u,(p,\lambda,\omega))=[ {p},\mathcal{A}(\hat{u}-u) ]-b_1(\hat{u}-u,\hat{\lambda}({p}))-b_2(\hat{u}-u,\hat{\omega}({p}))=l(\hat{u}-u)\,,
\end{split}
\end{equation*}
due to \eqref{equi} and then
\begin{equation}
\mathcal{B}(\hat{u}-u,(\hat{p}-p,\hat{\lambda}-\lambda,\hat{\omega}-\omega))=0\,.
\end{equation}
Thus \eqref{EVIplit} is obtained.

From \eqref{EVIf1} and \eqref{EVIf2}, it is obvious that
\begin{equation*}
\bar{\phi}_{(u,(p,\lambda,\omega))}(\hat{u}-u) \geq \frac{1}{2}a(u-\hat{u},u-\hat{u})\geq 0
\end{equation*}
and
\begin{equation*}
\bar{\phi}^*_{(u,(p,\lambda,\omega))}(\hat{p}-p,\hat{\lambda}-\lambda,\hat{\omega}-\omega)
\geq \frac{1}{2}[p-\hat{p},p-\hat{p}]\geq 0\,,
\end{equation*}
which give \eqref{uppbs}
\end{proof}

\begin{theorem}
The GCRE for the present EVI problem can be represented as the sum of the potential energy subject to an admissible field $\hat{u}\in K$ and the complementary energy subject to an admissible field $(\hat{p},\hat{\lambda},\hat{\omega})\in T$, i.e.
\begin{equation}\label{pippic3}
\Psi(\hat{u},(\hat{p},\hat{\lambda},\hat{\omega}))=\mathfrak{F}_p(\hat{u})+\mathfrak{F}_c(\hat{p},\hat{\lambda},\hat{\omega})\quad (\hat{u},(\hat{p},\hat{\lambda},\hat{\omega}))\in K\times T\,.
\end{equation}
\end{theorem}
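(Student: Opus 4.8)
The plan is to start from the definition of the GCRE in \eqref{gcre} with the specific identifications of $\phi$, $\phi^*$ and $\mathcal{B}$ given in \eqref{deffunc}, and simply regroup the terms to recognize $\mathfrak{F}_p(\hat{u})$ and $\mathfrak{F}_c(\hat{p},\hat{\lambda},\hat{\omega})$. Concretely, since $\phi(\hat{u})=\mathfrak{F}_p(\hat{u})+l(\hat{u})$ and $\phi^*(\hat{p},\hat{\lambda},\hat{\omega})=\mathfrak{F}_c(\hat{p},\hat{\lambda},\hat{\omega})$, we have
\begin{equation*}
\Psi(\hat{u},(\hat{p},\hat{\lambda},\hat{\omega}))=\mathfrak{F}_p(\hat{u})+l(\hat{u})+\mathfrak{F}_c(\hat{p},\hat{\lambda},\hat{\omega})-\mathcal{B}(\hat{u},(\hat{p},\hat{\lambda},\hat{\omega}))\,.
\end{equation*}
So the identity \eqref{pippic3} will follow once we show that $\mathcal{B}(\hat{u},(\hat{p},\hat{\lambda},\hat{\omega}))=l(\hat{u})$ for all admissible pairs $(\hat{u},(\hat{p},\hat{\lambda},\hat{\omega}))\in K\times T$.

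The key step is therefore exactly this orthogonality-type identity, and it is an immediate consequence of the defining property of the set $T$ in \eqref{equi}. Indeed, for $(\hat{p},\hat{\lambda},\hat{\omega})\in T$ the definition states that $[\hat{p},\mathcal{A}v]=b_1(v,\hat{\lambda})+b_2(v,\hat{\omega})+l(v)$ for all $v\in V$; taking $v=\hat{u}$ (which lies in $V$ since $K\subset V$) and rearranging gives
\begin{equation*}
\mathcal{B}(\hat{u},(\hat{p},\hat{\lambda},\hat{\omega}))=[\hat{p},\mathcal{A}\hat{u}]-b_1(\hat{u},\hat{\lambda})-b_2(\hat{u},\hat{\omega})=l(\hat{u})\,.
\end{equation*}
Substituting this into the expression above cancels the $l(\hat{u})$ terms and yields \eqref{pippic3}. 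Note that the compatibility condition $\hat{u}\in K$ is not actually needed for this identity beyond $\hat{u}\in V$; it is only relevant for the nonnegativity assertions established in the previous theorem.

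There is essentially no obstacle here: the result is a bookkeeping consequence of the way $\phi$, $\phi^*$, $\mathcal{B}$ and $T$ were set up, and the only substantive input is the equilibrium characterization of $T$. An alternative route — which avoids even writing out $\mathcal{B}$ — would be to combine the splitting identity \eqref{EVIplit} with the relations \eqref{EVIf1}, \eqref{EVIf2} and the variational characterizations $\mathfrak{F}_p(u)=-\mathfrak{F}_c(p,\lambda,\omega)$ from \eqref{pippic2}; but the direct term-regrouping argument is cleaner and self-contained.
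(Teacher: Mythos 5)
Your proposal is correct and follows essentially the same route as the paper: both rest on the observation that the definition of $T$ forces $\mathcal{B}(v,(\hat{p},\hat{\lambda},\hat{\omega}))=l(v)$ for all $v\in V$, after which \eqref{pippic3} drops out of the identifications $\phi(\hat{u})=\mathfrak{F}_p(\hat{u})+l(\hat{u})$ and $\phi^*=\mathfrak{F}_c$ in \eqref{deffunc}. Your write-up merely makes explicit the term-regrouping that the paper leaves as ``comes out naturally,'' and your remark that only $\hat{u}\in V$ (not $\hat{u}\in K$) is needed for the identity is a fair observation.
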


\begin{proof}
The definition of $T$ gives that for $(\hat{p},\hat{\lambda},\hat{\omega})\in T$,
\begin{equation*}
\mathcal{B}(v,(\hat{p},\hat{\lambda},\hat{\omega}))=l(v)\quad \forall v\in V\,.
\end{equation*}
Then \eqref{pippic3} comes out naturally from the definitions of $\Psi$, $\mathfrak{F}_p$ and $\mathfrak{F}_c$.
\end{proof}

\subsection{A frictional contact model problem}

\begin{figure}
  \centering
  \includegraphics[scale=0.6]{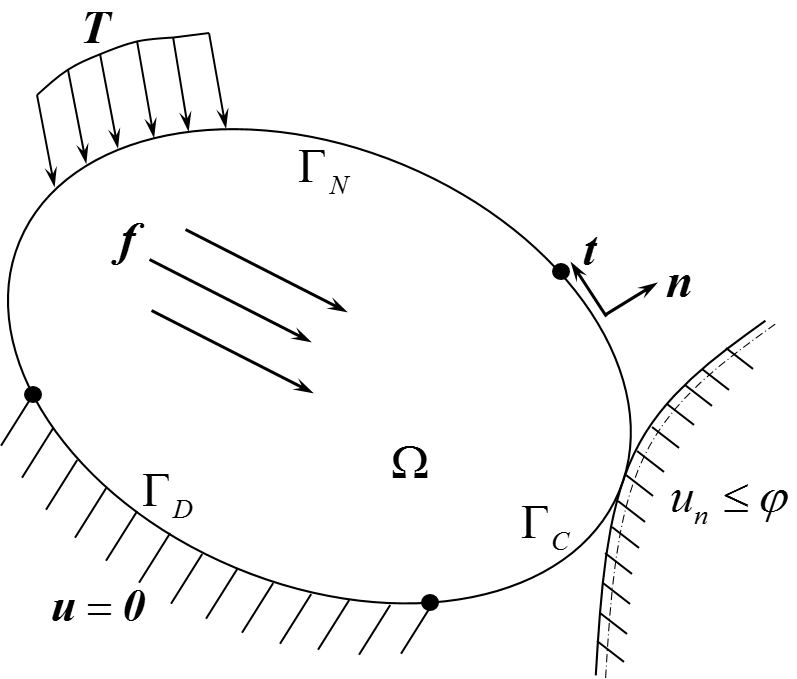}
  \caption{A frictional contact problem of a linear elastic body}\label{contact}
\end{figure}

Let us consider a Tresca-type frictional contact problem of a linear elastic body defined in $\Omega\subset \mathbb{R}^3$ with small deformation, as shown in Figure \ref{contact}, which is to find $(\boldsymbol{u},\boldsymbol{\sigma})$ such that
\begin{equation}
\begin{split}
\mathrm{div}\boldsymbol{\boldsymbol{\sigma}}+\boldsymbol{f}=\boldsymbol{0}\,,\quad \boldsymbol{\sigma}=\boldsymbol{K}:\nabla_S \boldsymbol{u}\quad \mathrm{in}\;\Omega\,,&\\
\boldsymbol{\sigma}\boldsymbol{n}=\boldsymbol{T}\quad \mathrm{on}\;\Gamma_N\,,\quad \boldsymbol{u}=\boldsymbol{0}\quad \mathrm{on}\;\Gamma_D\,,&\\
u_n\leq \varphi\,,\quad \sigma_n\leq 0\,,\quad
\sigma_n(u_n-\varphi)=0 \quad \mathrm{on}\;\Gamma_C\,,&\\
|\boldsymbol{\sigma}_{t}|\leq s\;\mathrm{with}\;
\begin{cases}
|\boldsymbol{\sigma}_{t}|< s\,\Rightarrow\,\boldsymbol{u}_{t}=\boldsymbol{0}\\
|\boldsymbol{\sigma}_{t}|= s\,\Rightarrow\,\exists\zeta\in\mathbb{R}^+:\boldsymbol{u}_{t}=-\zeta\boldsymbol{\sigma}_{t}
\end{cases}
\quad \mathrm{on}\;\Gamma_C\,,&
\end{split}
\end{equation}
where $\boldsymbol{K}$ is Hooke's stiffness tensor for isotropic linear elastic materials, and
\begin{equation}\label{ntau}
\begin{split}
& u_n=\boldsymbol{n}\cdot\boldsymbol{u}\,,\quad \boldsymbol{u}_{t}=\boldsymbol{u}-u_n \boldsymbol{n}\,,\\
& \sigma_n=(\boldsymbol{\sigma}\boldsymbol{n})\cdot\boldsymbol{n}\,,\quad \boldsymbol{\sigma}_{t}=\boldsymbol{\sigma}\boldsymbol{n}-\sigma_n \boldsymbol{n}\,.
\end{split}
\end{equation}

Alternatively, this problem can be expressed in the form of \eqref{func0} or \eqref{func} by taking
\begin{equation}
\begin{split}
V=\{\boldsymbol{v}\in [H^1(\Omega)]^3:\boldsymbol{v}|_{\Gamma_D}=\boldsymbol{0}\}\,,\quad S=\{\boldsymbol{\tau}\in [H(\mathrm{div},\Omega)]^{3}:\boldsymbol{\tau}^T=\boldsymbol{\tau}\}\,,&\\
L=L^2(\Gamma_C)\,,\quad M=\{\eta\in L:\eta\leq 0\;\mathrm{a.e.~on}\;\Gamma_C\}\,,&\\
Q=\{\boldsymbol{\xi}\in[L^2(\Gamma_C)]^2:\boldsymbol{\xi}\perp \boldsymbol{n}\}\,,\quad N=\{\boldsymbol{\xi}\in Q:|\boldsymbol{\xi}|\leq s\;\mathrm{a.e.~on}\;\Gamma_C\}\,,&\\
[\boldsymbol{\sigma},\boldsymbol{\tau}]=\langle\boldsymbol{\sigma},\boldsymbol{K}^{-1}:\boldsymbol{\tau}\rangle\,,\quad (\boldsymbol{\sigma},\boldsymbol{\tau})\in S\times S\,,&\\
a(\boldsymbol{u},\boldsymbol{v})=\langle \boldsymbol{K}:\nabla_S \boldsymbol{u},\nabla_S \boldsymbol{v}\rangle\,,\quad (\boldsymbol{u},\boldsymbol{v})\in V\times V\,,&\\
l(\boldsymbol{v})=\int_\Omega\boldsymbol{f}\cdot\boldsymbol{v}+\int_{\Gamma_N}\boldsymbol{T}\cdot\boldsymbol{v}\,,\quad j(\boldsymbol{v})=\int_{\Gamma_C}s|\boldsymbol{v}_{t}|\,,\quad \boldsymbol{v}\in V\,,&\\
b_1(\boldsymbol{v},\eta)=\int_{\Gamma_C}v_n \eta\,,\quad g_1(\eta)=\int_{\Gamma_C}\varphi\eta\,,\quad (\boldsymbol{v},\eta)\in V\times L\,, &\\
b_2(\boldsymbol{v},\boldsymbol{\xi})=\int_{\Gamma_C}\boldsymbol{v}_{t}\cdot\boldsymbol{\xi}\,,\quad g_2(\boldsymbol{\xi})=0\,,\quad (\boldsymbol{v},\boldsymbol{\xi})\in V\times Q\,, &
\end{split}
\end{equation}
where the definitions of $v_n$ and $\boldsymbol{v}_{t}$ are similar to those of $u_n$ and $\boldsymbol{u}_{t}$ in \eqref{ntau}, respectively.

The admissible solutions are given as follows:
\begin{equation}
\begin{split}
\hat{\boldsymbol{u}}\in K & =\left\{\boldsymbol{v}\in V:\int_{\Gamma_C}(v_n-\varphi)\eta\geq 0\,,\;\forall \eta \in M\right\}\,,\\
(\hat{\boldsymbol{\sigma}},\hat{\sigma}_n,\hat{\boldsymbol{\sigma}}_{t})\in T & =
\left\{(\boldsymbol{\tau},\eta,\boldsymbol{\xi})\in S\times M\times N:\langle\boldsymbol{\tau},\nabla_S\boldsymbol{v}\rangle=
\int_{\Gamma_C}\left(v_n\eta+\boldsymbol{v}_{t}\cdot\boldsymbol{\xi}\right)+l(\boldsymbol{v})\,,\;\forall \boldsymbol{v}\in V\right\},
\end{split}
\end{equation}
and the definition of $T$ is alternatively written in the following form using Green's theorem:
\begin{equation}\label{defT}
\begin{split}
T=\{(\boldsymbol{\tau},\eta,\boldsymbol{\xi})\in S\times M\times N:\mathrm{div}\boldsymbol{\tau}+\boldsymbol{f}=0\;\mathrm{a.e.~in}\;\Omega\,,\;&\\
\boldsymbol{\tau}\boldsymbol{n}=\boldsymbol{T}\;\mathrm{a.e.~on}\;\Gamma_N\,,\;\left(\tau_n,\,\boldsymbol{\tau}_t\right)=(\eta,\boldsymbol{\xi})\;\mathrm{a.e.~on}\;\Gamma_C\}\,.
\end{split}
\end{equation}

In this case, the generalized potential and complementary energy functionals are defined as:
\begin{equation}
\begin{split}
\phi(\boldsymbol{v}) & =\frac{1}{2}\langle \boldsymbol{K}:\nabla_S \boldsymbol{v},\nabla_S \boldsymbol{v}\rangle+\int_{\Gamma_C}s|\boldsymbol{v}_{t}|\,,\quad \boldsymbol{v}\in V\,,\\
\phi^*(\boldsymbol{\tau},\eta,\boldsymbol{\xi}) & =\frac{1}{2}\langle\boldsymbol{\tau},\boldsymbol{K}^{-1}:\boldsymbol{\tau}\rangle-\int_{\Gamma}\varphi\eta\,,\quad (\boldsymbol{\tau},\eta,\boldsymbol{\xi})\in S\times L\times Q\,,\\
\mathcal{B}(\boldsymbol{v},(\boldsymbol{\tau},\eta,\boldsymbol{\xi})) & =\langle\boldsymbol{\tau},\nabla_S \boldsymbol{v}\rangle-\int_{\Gamma_C}\left(v_n\eta+\boldsymbol{v}_{t}\cdot\boldsymbol{\xi}\right)\,,\quad (\boldsymbol{v},(\boldsymbol{\tau},\eta,\boldsymbol{\xi}))\in V\times(S\times L\times Q)\,.
\end{split}
\end{equation}

Furthermore, the GCRE provides an upper bound of global energy error of admissible displacement field $\hat{\boldsymbol{u}}$ or admissible stress field $\hat{\boldsymbol{\sigma}}$, i.e
\begin{equation}
\begin{split}
\Psi(\hat{\boldsymbol{u}},(\hat{\boldsymbol{\sigma}},\hat{\sigma}_n,\hat{\boldsymbol{\sigma}}_{t}))=&~\mathfrak{F}_p(\hat{\boldsymbol{u}})+\mathfrak{F}_c(\hat{\boldsymbol{\sigma}},\hat{\sigma}_n,\hat{\boldsymbol{\sigma}}_{t})\\
= &~ \bar{\phi}_{(\boldsymbol{u},(\boldsymbol{\sigma},\sigma_n,\boldsymbol{\sigma}_t))}(\hat{\boldsymbol{u}}-\boldsymbol{u})+\bar{\phi}^*_{(\boldsymbol{u},(\boldsymbol{\sigma},\sigma_n,\boldsymbol{\sigma}_t))}(\hat{\boldsymbol{\sigma}}-\boldsymbol{\sigma},\hat{\sigma}_n-\sigma_n,\hat{\boldsymbol{\sigma}}_t-\boldsymbol{\sigma}_t)\\
\geq &~
\begin{cases}
\frac{1}{2}a(\boldsymbol{u}-\hat{\boldsymbol{u}},\boldsymbol{u}-\hat{\boldsymbol{u}})\,,\\
\frac{1}{2}[\boldsymbol{\sigma}-\hat{\boldsymbol{\sigma}},\boldsymbol{\sigma}-\hat{\boldsymbol{\sigma}}]\,.
\end{cases}
\end{split}
\end{equation}

\section{Application to \emph{a posteriori} error estimation}

The key idea about the application of GCRE to \emph{a posteriori} error estimation is to construct admissible solutions based on the numerical solution to the problem in question. For the convenience of expression, the frictional contact problem in Subsection 4.5 is taken as an example herein.

When a displacement field solution $\boldsymbol{u}_h$ is obtained using a specific numerical method, such as the conforming or non-conforming finite element method, the discontinuous Galerkin method, and the mixed finite element method with the displacement field as unknown, an admissible displacement field $\hat{\boldsymbol{u}}_h \in K$ can be constructed via post-processing, even if $\boldsymbol{u}_h \not\in K$. In this process, $\hat{\boldsymbol{u}}_h$ is usually taken close enough to $\boldsymbol{u}_h$, and can also act as a solution to the problem under the mesh characterized by size $h$. Then one has
\begin{equation}
a(\boldsymbol{u}-\boldsymbol{u}_h,\boldsymbol{u}-\boldsymbol{u}_h)\approx a(\boldsymbol{u}-\hat{\boldsymbol{u}}_h,\boldsymbol{u}-\hat{\boldsymbol{u}}_h)\,.
\end{equation}

The admissible stress-traction solution $(\hat{\boldsymbol{\sigma}}_h,\hat{\sigma}_{nh},\hat{\boldsymbol{\sigma}}_{th})$ can be constructed using some available recovering techniques to obtain equilibrated stress fields from the displacement solution $\boldsymbol{u}_h$. These recovering techniques of equilibrated stress fields includes those based on element-wise Neumann problems (such as the element equilibrium technique (EET) \cite{pled2011techniques}) and those based on subdomain-wise problems (such as the traction-free method based on the partition of unity \cite{gallimard2009constitutive}), see \cite{ladeveze2005mastering,ainsworth2011posteriori} for more references.

Then, one has the GCRE as a strictly upper-bounding estimator:
\begin{equation}
a(\boldsymbol{u}-\hat{\boldsymbol{u}}_h,\boldsymbol{u}-\hat{\boldsymbol{u}}_h)\leq 2~\Psi(\hat{\boldsymbol{u}}_h,(\hat{\boldsymbol{\sigma}}_h,\hat{\sigma}_{nh},\hat{\boldsymbol{\sigma}}_{th}))\,.
\end{equation}
Usually, the recovering techniques ensure that the asymptotic behavior of CRE is verified \cite{ladeveze2005mastering}, i.e. there exists a constant $c >0$ independent of $h$ such that
\begin{equation}
\Psi(\hat{\boldsymbol{u}}_h,(\hat{\boldsymbol{\sigma}}_h,\hat{\sigma}_{nh},\hat{\boldsymbol{\sigma}}_{th})) \leq c ~ a(\boldsymbol{u}-\boldsymbol{u}_h,\boldsymbol{u}-\boldsymbol{u}_h)\,.
\end{equation}

It is worth figuring out that the CRE for visco-plasticity problems in \cite{ladeveze2016constitutive}, which acts as \emph{a posteriori} global error estimator, could be considered as a specific case of the proposed GCRE. The CREs for Coulomb-type frictional contact problems, elastic body to elastic body in \cite{becheur2008posteriori,louf2003constitutive} and elastic body to rigid foundation in \cite{coorevits2001posteriori}, could also be classified into the GCRE, coinciding with the abstract EVI form in Section 4.

\section{Conclusions}

A generalized constitutive relation error is proposed in an analogous but generalized formulation of Fenchel-Young inequality, a natural result of Legendre-Fenchel duality. It has been shown that the GCRE provides strict upper bounds of the global errors of admissible solutions to a class of elliptic variational inequalities. The GCRE is believed to be applicable to a wide range of convex variational problems and the corresponding \emph{a posteriori} error estimation of the numerical solutions.

\section*{Acknowledgement}

Some of the work was carried out during the first author's visit to Department of Mathematics at the University of Iowa under the financial support of Tsinghua Scholarship for Overseas Graduate Studies under Grant No. 2015011. This work was also supported by the National Natural Science Foundation of China under Grant No. 51378294.


\bibliographystyle{abbrv}
\bibliography{refs}

\begin{thebibliography}{10}

\bibitem{ainsworth2011posteriori}
M.~Ainsworth and J.~T. Oden.
\newblock {\em A posteriori error estimation in finite element analysis},
  volume~37.
\newblock John Wiley \& Sons, 2011.

\bibitem{atkinson2005theoretical}
K.~Atkinson and W.~Han.
\newblock {\em Theoretical numerical analysis: a functional analysis framework,
  3rd edition}.
\newblock Springer, 2009.

\bibitem{becheur2008posteriori}
A.~Becheur, A.~Tahakourt, and P.~Coorevits.
\newblock An a posteriori error indicator for $\mathrm{C}$oulomb's frictional
  contact.
\newblock {\em Mechanics Research Communications}, 35(8):562--575, 2008.

\bibitem{borwein2010convex}
J.~M. Borwein and A.~S. Lewis.
\newblock {\em Convex analysis and nonlinear optimization: theory and
  examples}.
\newblock Springer Science \& Business Media, 2010.

\bibitem{chamoin2009strict}
L.~Chamoin and P.~Ladev{\`e}ze.
\newblock Strict and practical bounds through a non-intrusive and goal-oriented
  error estimation method for linear viscoelasticity problems.
\newblock {\em Finite Elements in Analysis and Design}, 45(4):251--262, 2009.

\bibitem{ciarlet1988mathematical}
P.~G. Ciarlet.
\newblock {\em Mathematical Elasticity, volume I: Three-dimensional
  Elasticity}.
\newblock North-Holland, Amsterdam, 1988.

\bibitem{ciarlet2012new}
P.~G. Ciarlet, G.~Geymonat, and F.~Krasucki.
\newblock A new duality approach to elasticity.
\newblock {\em Mathematical Models and Methods in Applied Sciences},
  22(01):1150003, 2012.

\bibitem{coorevits2001posteriori}
P.~Coorevits, P.~Hild, and M.~Hjiaj.
\newblock A posteriori error control of finite element approximations for
  $\mathrm{C}$oulomb's frictional contact.
\newblock {\em SIAM Journal on Scientific Computing}, 23(3):976--999, 2001.

\bibitem{ekeland1976convex}
I.~Ekeland and R.~Temam.
\newblock {\em Convex analysis and variational problems}.
\newblock SIAM, 1976.

\bibitem{gallimard2009constitutive}
L.~Gallimard.
\newblock A constitutive relation error estimator based on traction-free
  recovery of the equilibrated stress.
\newblock {\em International Journal for Numerical Methods in Engineering},
  78(4):460--482, 2009.

\bibitem{guo2015goal}
M.~Guo and H.~Zhong.
\newblock Goal-oriented error estimation for beams on elastic foundation with
  double shear effect.
\newblock {\em Applied Mathematical Modelling}, 39(16):4699--4714, 2015.

\bibitem{ladeveze2008strict}
P.~Ladev{\`e}ze.
\newblock Strict upper error bounds on computed outputs of interest in
  computational structural mechanics.
\newblock {\em Computational Mechanics}, 42(2):271--286, 2008.

\bibitem{ladeveze2010calculation}
P.~Ladev\`{e}ze and L.~Chamoin.
\newblock Calculation of strict error bounds for finite element approximations
  of non-linear pointwise quantities of interest.
\newblock {\em International Journal for Numerical Methods in Engineering},
  84(13):1638--1664, 2010.

\bibitem{ladeveze2016constitutive}
P.~Ladev{\`e}ze and L.~Chamoin.
\newblock The constitutive relation error method: A general verification tool.
\newblock In {\em Verifying Calculations-Forty Years On}, pages 59--94.
  Springer, 2016.

\bibitem{ladeveze1983error}
P.~Ladev\`{e}ze and D.~Leguillon.
\newblock Error estimate procedure in the finite element method and
  applications.
\newblock {\em SIAM Journal on Numerical Analysis}, 20(3):485--509, 1983.

\bibitem{ladeveze2005mastering}
P.~Ladev{\`e}ze, J.~P. Pelle, F.~F. Ling, E.~F. Gloyna, and W.~H. Hart.
\newblock {\em Mastering calculations in linear and nonlinear mechanics}.
\newblock Springer, 2005.

\bibitem{ladeveze2013new}
P.~Ladev\`{e}ze, F.~Pled, and L.~Chamoin.
\newblock New bounding techniques for goal-oriented error estimation applied to
  linear problems.
\newblock {\em International Journal for Numerical Methods in Engineering},
  93(13):1345--1380, 2013.

\bibitem{ladeveze1999local}
P.~Ladev\`{e}ze, P.~Rougeot, P.~Blanchard, and J.~Moreau.
\newblock Local error estimators for finite element linear analysis.
\newblock {\em Computer Methods in Applied Mechanics and Engineering},
  176(1):231--246, 1999.

\bibitem{louf2003constitutive}
F.~Louf, J.-P. Combe, and J.-P. Pelle.
\newblock Constitutive error estimator for the control of contact problems
  involving friction.
\newblock {\em Computers \& structures}, 81(18):1759--1772, 2003.

\bibitem{panetier2009strict}
J.~Panetier, P.~Ladev{\`e}ze, and F.~Louf.
\newblock Strict bounds for computed stress intensity factors.
\newblock {\em Computers \& Structures}, 87(15):1015--1021, 2009.

\bibitem{pled2011techniques}
F.~Pled, L.~Chamoin, and P.~Ladev\`{e}ze.
\newblock On the techniques for constructing admissible stress fields in model
  verification: Performances on engineering examples.
\newblock {\em International Journal for Numerical Methods in Engineering},
  88(5):409--441, 2011.

\bibitem{wang2015unified}
L.~Wang and H.~Zhong.
\newblock A unified approach to strict upper and lower bounds of quantities in
  linear elasticity based on constitutive relation error estimation.
\newblock {\em Computer Methods in Applied Mechanics and Engineering},
  286:332--353, 2015.

\end{thebibliography}

\end{document}